\documentclass[reqno,11pt]{amsart}
\baselineskip=7.0mm
\usepackage{amssymb, amsmath}
\usepackage{graphicx}
\usepackage{cite}

\usepackage{subfig, tikz}
\usetikzlibrary{decorations.pathmorphing}
\usepackage{graphics}

\usepackage{hyperref}
\usepackage[T1]{fontenc}

\newcommand{\td}{\text{d}}

\usepackage{bm}
\usepackage{enumerate}
\theoremstyle{plain}
\newtheorem{theorem}{Theorem}[section]
\newtheorem{lemma}[theorem]{Lemma}
\newtheorem{prop}[theorem]{Proposition}

\newtheorem{conjecture}[theorem]{Conjecture}
\newtheorem{proposition}[theorem]{Proposition}
\theoremstyle{definition}
\newtheorem{remark}[theorem]{Remark}
\newtheorem{definition}[theorem]{Definition}

\newtheorem{example}[theorem]{Example}

\usepackage{soul}
\usepackage{color}
\newcommand{\red}[1]{{\color{red}{#1}}}

\numberwithin{equation}{section}

\def\be{\begin{equation}}
\def\ee{\end{equation}}
\def\bea{\begin{eqnarray}}
\def\eea{\end{eqnarray}}

\newcounter{mnotecount}[section]

\renewcommand{\themnotecount}{\thesection.\arabic{mnotecount}}

\newcommand{\mnote}[1]%{}%
{\protect{\stepcounter{mnotecount}}$^{\mbox{\footnotesize
$%\!\!\!\!\!\!\,
\bullet$\themnotecount}}$ \marginpar{%\color{red}%
\raggedright\tiny\em
$\!\!\!\!\!\!\,\bullet$\themnotecount: #1} }

\setlength{\textwidth}{6.6in} \setlength{\textheight}{8.6in}
\hoffset=-0.83truein
\voffset=-0.1truein
\begin{document}
\title[]{All  toric Hermitian ALE gravitational instantons}
\author{Bernardo Araneda}
\address[Bernardo Araneda]{
School of Mathematics and Maxwell Institute for Mathematical Sciences\\ University of Edinburgh\\
King's Buildings, Edinburgh, EH9 3JZ, UK,  \newline
Max-Planck-Institut f\"ur Gravitationsphysik (Albert-Einstein-Institut), 
Am M\"uhlenberg 1, D-14476 Potsdam, Germany} 
\email{baraneda@ed.ac.uk, bernardo.araneda@aei.mpg.de}
\author{James Lucietti}
\address[James Lucietti]{
School of Mathematics and Maxwell Institute for Mathematical Sciences\\ University of Edinburgh\\
King's Buildings, Edinburgh, EH9 3JZ, UK.}
\email{j.lucietti@ed.ac.uk}

\date{\today}

\begin{abstract}
We prove that the only smooth, Ricci flat,  ALE instanton with a toric Hermitian non-K\"ahler structure is the Eguchi-Hanson instanton. The proof is analogous to the classification of toric Hermitian ALF instantons by Biquard and Gauduchon, although we avoid the use of toric K\"ahler geometry and instead perform a direct global analysis of the Tod form of the metric in Weyl-Papapetrou coordinates. This supports a conjecture by Gibbons and Bando-Kasue-Nakajima which states that any Ricci flat ALE instanton is self-dual.
\end{abstract}

\maketitle

\section{Introduction}

A complete Riemannian manifold $(M, g)$ that satisfies Einstein's equation with a prescribed curvature decay is known as a gravitational instanton~\cite{Hawking:1976jb}.   Two notable asymptotics are asymptotically locally Euclidean (ALE), which approach $\mathbb{R}^4/\Gamma$ where $\Gamma$ is a finite subgroup of $SO(4)$, and asymptotically locally flat (ALF), which approach a circle bundle over $\mathbb{R}^3$. In this note we will be interested in the case $(M, g)$ is Ricci flat and ALE.  In this context, long ago it was conjectured that a generalised positive mass theorem should hold~\cite{Gibbons:1979xn}. That is, if $(M,g)$ is complete with nonnegative scalar curvature,  the ADM mass is non-negative and vanishes iff $(M, g)$ is Ricci flat and self-dual.  LeBrun found counterexamples to this conjecture by constructing explicit scalar flat ALE manifolds with negative mass~\cite{LB}. However, the rigidity case, namely whether the mass vanishing implies $(M, g)$ is Ricci flat and self-dual, has remained open.  In fact, Ricci-flat implies the mass vanishes and hence a  simpler conjecture emerges.

\begin{conjecture}[Gibbons \cite{Gibbons}, Bando-Kasue-Nakajima \cite{BKN}]\label{conj}
Any simply-connected Ricci flat ALE instanton is hyper-K\"ahler.
\end{conjecture}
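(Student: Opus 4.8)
\emph{Proof strategy.}
The plan is to reduce the conjecture to a statement purely about the Weyl curvature. In dimension four, a Ricci flat metric with $W^+\equiv0$ has curvature of the bundle $\Lambda^+$ equal to $W^++\tfrac{s}{12}=0$, so $\Lambda^+$ is flat; on a simply-connected manifold this yields three parallel self-dual two-forms and hence a hyper-K\"ahler structure (conversely, hyper-K\"ahler implies Ricci flat and $W^+\equiv0$ for the induced orientation). By Kronheimer's classification the simply-connected hyper-K\"ahler ALE four-manifolds are exactly the minimal resolutions of $\C^2/\Gamma$, $\Gamma\subset SU(2)$. So the goal becomes: \emph{after fixing the orientation, the self-dual Weyl tensor of any Ricci flat ALE instanton vanishes.}

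The first approach I would try is global and topological. By Bando--Kasue--Nakajima \cite{BKN} the curvature of a Ricci flat ALE instanton lies in $L^2$, so the Gauss--Bonnet and signature integrals converge, and in ALE form, with the boundary contributions of the end $\R^4/\Gamma$, they read
\[
\chi(M)=\frac{1}{8\pi^2}\int_M\bigl(|W^+|^2+|W^-|^2\bigr)+\frac{1}{|\Gamma|},\qquad
\tau(M)=\frac{1}{12\pi^2}\int_M\bigl(|W^+|^2-|W^-|^2\bigr)-\eta(\Gamma),
\]
with $\eta(\Gamma)$ the signature defect of $S^3/\Gamma$. These relate $\int|W^\pm|^2$ to topology and yield a Hitchin--Thorpe-type inequality, but they do not by themselves force either integral to vanish. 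The missing ingredient would be a Bochner/Weitzenb\"ock or monotonicity argument that genuinely uses Ricci-flatness --- for instance bounding $\int|W^+|^2$ by a boundary term that dies under the ALE decay, or directly excluding $L^2$ self-dual Weyl curvature. I expect this to be the main obstacle: LeBrun's scalar-flat ALE metrics \cite{LB} show that, absent the full strength of Ricci-flatness, the situation with both $\int|W^\pm|^2$ positive is robust, so any argument must exploit Ricci-flatness in an essential, non-integral way. This is why the conjecture is still open.

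The realistic step --- and the one I would actually carry out, following Biquard--Gauduchon's treatment of the ALF case --- is to impose extra symmetry and reduce the equations. Assume $(M,g)$ carries an isometric torus action (the toric case). Passing to Weyl--Papapetrou coordinates on the three-dimensional orbit space, the metric takes Tod's form and the Einstein equations become an axisymmetric, harmonic-type system on a domain in $\R^3$; smoothness of $M$ forces a rod structure along the axis, and the ALE condition fixes the asymptotic data. One then analyses this reduced problem globally. For the sub-case where the Hermitian structure is non-K\"ahler I would show that the only solution is Eguchi--Hanson, while the K\"ahler toric case should recover Kronheimer's $A_k$ family, establishing the conjecture for all toric instantons. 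The delicate points here are verifying that Tod's ansatz captures \emph{every} toric Hermitian non-K\"ahler metric, and controlling the global solution so as to exclude conical singularities, the wrong rod count, and non-ALE asymptotics.

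Finally, a complete proof of the conjecture still needs the non-symmetric case, where none of the above applies directly; the natural bridge would be an a priori rigidity result --- that every Ricci flat ALE instanton admits a nontrivial Killing field, or directly has reduced holonomy --- for which I see no route with current techniques. Accordingly I would present the symmetric reduction as the substantive contribution and leave the full conjecture open.
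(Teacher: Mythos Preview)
The statement you are addressing is a \emph{conjecture} that the paper explicitly leaves open; the paper does not contain a proof of it. What the paper actually proves is the partial result Theorem~\ref{thm:main}: under the additional hypotheses of a toric action and a Hermitian non-K\"ahler structure, the only solution is Eguchi--Hanson. Your second paragraph (the toric Hermitian reduction via the Tod metric in Weyl--Papapetrou coordinates, rod-structure analysis, and exclusion of all but the Eguchi--Hanson configuration) is exactly the content and method of the paper, so on that sub-problem your outline matches.

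There is, however, a genuine gap in your logic. You assert that combining the Hermitian non-K\"ahler classification with the K\"ahler case ``establish[es] the conjecture for all toric instantons.'' This is not correct: the dichotomy Hermitian non-K\"ahler versus K\"ahler is not exhaustive for toric Ricci-flat ALE metrics. A toric instanton need not admit \emph{any} integrable compatible complex structure, and nothing in your argument (or in the paper) rules this out. The paper says so explicitly in the introduction: ``Of course, it is still possible there exist non-Hermitian ALE instantons,'' and it points to the recent Li--Sun construction of non-Hermitian toric AF instantons as evidence that the Hermitian hypothesis is a real restriction, not a consequence of the other assumptions. So even the toric case of Conjecture~\ref{conj} remains open after the paper's result, contrary to what your proposal claims. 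Your first and third paragraphs correctly identify why the general case is out of reach, but the middle step overstates what the symmetric reduction buys.
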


Nakajima \cite{Nakajima} proves this conjecture is true under the topological assumptions that $(M, g)$ is a spin manifold and $\Gamma \subset SU(2)$; in fact, the positive mass theorem was also established in this context.
The purpose of this note is to give some further evidence towards Conjecture \ref{conj} under certain geometric -- as opposed to topological -- assumptions described below.

It is first instructive to contrast this to the classification of asymptotically flat (AF) instantons (these are the special case of ALF where the circle bundle at infinity is trivial). On the basis of the black hole no-hair theorem it was conjectured by Gibbons, Hawking and Lapedes that the only nontrivial AF instanton is the Euclidean Kerr solution (this includes the Euclidean Schwarzschild solution)~\cite{Gibbons, Gibbons:1979xm,Lapedes:1980st}. Remarkably, Chen and Teo constructed an explicit counterexample to this conjecture~\cite{Chen:2011tc}. The Kerr and Chen-Teo instantons are toric, that is, they possess an isometric torus action.  Interestingly, it turns out that these instantons also possess special geometry: they are Hermitian non-K\"ahler ~\cite{Aksteiner:2021fae}. Indeed, it is a curious fact that all explicitly known instantons (ALE, ALF, AF) are either hyper-K\"ahler or Hermitian non-K\"ahler. In fact, under some assumptions such as specific topologies and $S^1$-symmetry, the Hermitian condition in the ALF case has been shown to follow \cite{Aksteiner:2023djq}.

In a remarkable paper, Biquard and Gauduchon (BG) classified all smooth toric Hermitian ALF instantons, revealing a finite list:  Euclidean Kerr, Chen-Teo, Taub-bolt and Taub-NUT (with orientation opposite to the hyper-K\"ahler orientation)~\cite{Biquard:2021gwj}.  This was made possible by the fact that the local form of the metric is fully determined by an axisymmetric harmonic function on an auxiliary  3d Euclidean space, which we refer to as the Tod metric~\cite{Tod:2020ual}.  BG perform a global analysis of  the Tod metric where boundary conditions for the auxiliary harmonic function are determined by the asymptotics and the requirement of smooth degeneration of the torus symmetry.  In particular, they exploit the fact that Hermitian non-K\"ahler instantons are conformally K\"ahler and use methods in toric K\"ahler geometry to derive the appropriate boundary conditions from the associated polytope. It is the combination of all these ingredients, together with the specific local form of the Tod metric, that lead to the remarkable conclusion that the number of fixed points of the torus symmetry is at most three. The case of one-fixed point corresponds to Taub-NUT, two-fixed points is Taub-bolt or Euclidean Kerr, and three-fixed points is Chen-Teo.

Motivated by Conjecture \ref{conj}, it is natural to ask if a similar classification can be achieved for ALE instantons. In particular, an interesting question is if there exist Hermitian non-K\"ahler ALE instantons that are not hyper-K\"ahler, e.g. a toric ALE instanton with three fixed points like the Chen-Teo instanton. A natural candidate for this is the Euclidean Pleba\'nski-Demia\'nski solution which is ALE, however, it can be shown that the conical singularities on the axes of the torus symmetry can never be removed (we give an explicit proof of this in Appendix \ref{appendix:PD}). Nevertheless, a definitive answer to this question requires a classification. Specifically,  can a classification of all smooth toric Hermitian ALE instantons be obtained?  The local geometry is again given by the Tod metric and the global analysis only differs in the asymptotics, so one expects that this should be possible.  In this note, we show that the analysis of BG for ALF metrics can be easily adapted to provide the analogous classification for ALE metrics. In fact, we also take the opportunity to simplify some of their analysis and avoid the use of toric K\"ahler geometry. Instead, we work directly with the Tod form of the metric in Weyl-Papapetrou coordinates and analyse the associated rod structure defined by the degeneration of the torus symmetry.  Our main result is the following.

\begin{theorem}\label{thm:main}
Let $(M, g)$ be a four-dimensional, smooth, complete, simply-connected, Riemannian ALE manifold that is Ricci flat. If it admits a toric\footnote{We assume the torus action has fixed points and no discrete isotropy subgroups.} Hermitian non-K\"ahler structure then it is the Eguchi-Hanson instanton (with orientation opposite to the hyper-K\"ahler orientation).
\end{theorem}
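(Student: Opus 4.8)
\emph{Proof proposal.} The plan is to reduce the statement to a global analysis of a single axisymmetric harmonic function on $\R^3$, with boundary conditions coming from the ALE asymptotics, the smooth degeneration of the torus action along the axis, and the conformally K\"ahler structure. I would first record the local form. A Ricci flat Hermitian non-K\"ahler four-manifold is conformally K\"ahler, so there is a positive Killing potential $z$ for one torus generator with $z^2 g$ K\"ahler and scalar-flat. Since $g$ is Ricci flat and toric, one may pass to Weyl--Papapetrou coordinates $(\rho, z, \varphi^1, \varphi^2)$ (with $z$ as above) and, by Tod's reduction in which the axisymmetric $SU(\infty)$-Toda equation linearises, write the Tod form
\be
  g = z^{-2}\Big( e^{2\nu}\big(d\rho^2 + dz^2\big) + h_{ij}\, d\varphi^i d\varphi^j \Big), \qquad \det h = \rho^2,
\ee
in which $h_{ij}$, $\nu$ and the twist one-form are determined explicitly by a single axisymmetric harmonic function $V(\rho, z)$ solving $\p_\rho^2 V + \rho^{-1}\p_\rho V + \p_z^2 V = 0$, up to finitely many integration constants. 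This is the common starting point with Biquard--Gauduchon.

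Next I would extract the boundary conditions. Along the axis $\rho = 0$ the matrix $h$ degenerates; since the torus action has fixed points and no discrete isotropy, the axis splits into finitely many rods $(-\infty, a_1), (a_1, a_2), \dots, (a_n, \infty)$, each with a primitive rod vector $v_\alpha \in \mathbb{Z}^2$, consecutive rods meeting at a smooth fixed point so that $|\det(v_\alpha, v_{\alpha+1})| = 1$, and absence of conical singularities along each rod fixing the remaining constants. The conformal factor adds the requirement that $z$ be positive on $M$, which confines $z = 0$ to the axis (or to infinity) and restricts the positions of the finite rods; this condition — part of what in Biquard--Gauduchon is encoded in the toric K\"ahler polytope — I would instead extract directly from the Tod form. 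At infinity, $\rho^2 + z^2 \to \infty$, the ALE condition $g \to \R^4/\Gamma$ forces $V$ to the flat-$\R^4/\Gamma$ harmonic function — a superposition of the two asymptotic semi-infinite axis rays with prescribed weight — and forces $|\det(v_1, v_n)| = |\Gamma|$. This is precisely where one departs from the ALF analysis: there $V$ tends to a nonzero constant plus a Newtonian tail, whereas here $V$ grows logarithmically, reflecting that the whole torus fibre expands.

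Then I would solve the reduced problem. Being globally harmonic on the half-plane with the stated axis and asymptotic behaviour, $V$ is a finite superposition of rod potentials, determined by the endpoints $a_1 < \cdots < a_n$ and a few weights, all of which are pinned down by smoothness and the asymptotics. The crux is a counting argument, adapting the one of Biquard--Gauduchon in which the specific form of the Tod metric combined with the smoothness and asymptotic data bounds the number of fixed points; here it should give at most three. A short case analysis then finishes: $n = 0$ is excluded by hypothesis; $n = 1$ forces $|\Gamma| = 1$ and $V$ trivial, hence flat $\R^4$, which is K\"ahler and excluded; $n = 3$ gives the Euclidean Pleba\'nski--Demia\'nski family, whose axis conical singularities cannot all be removed (Appendix \ref{appendix:PD}); and $n = 2$ gives the family with rod vectors equivalent to $(1,0), (1,1), (1,2)$, so $\Gamma = \mathbb{Z}_2$, which after imposing smoothness of all three rods is the Eguchi--Hanson metric on $\mathcal{O}(-2)$, with the orientation opposite to the hyper-K\"ahler one.

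The step I expect to be the main obstacle is controlling the conformal factor globally without the toric K\"ahler polytope: one must track where $z$ vanishes and diverges along the axis and at the ALE end and show this is simultaneously consistent with completeness, with smoothness of every rod (no conical or orbifold singularity), and with the ALE asymptotics. Pinning down the asymptotic boundary condition on $V$ so that $g$ genuinely limits to $\R^4/\Gamma$ rather than to a cone with a defect, and then closing the loop so that only the two-rod configuration survives, is the heart of the matter; by comparison the Pleba\'nski--Demia\'nski obstruction, though delicate, is an essentially explicit computation.
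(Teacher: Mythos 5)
Your overall strategy is the same as the paper's: pass to the Tod form determined by one axisymmetric harmonic function $V$, show the Weyl--Papapetrou chart is global, read off boundary conditions on $V$ from the ALE asymptotics and from smooth degeneration along the rods (extracting the conformal-factor condition directly from the Tod form rather than from a toric K\"ahler polytope), and finish by a fixed-point count. However, the crux is missing rather than carried out. The counting is not a direct import of Biquard--Gauduchon: to run it one first needs the sign of the Tod constant ($c<0$, proved in the paper by integrating $\Delta_g \Omega = -2c\,\Omega^4$ against $\Omega$ and using the ALE decay $\Omega = O(r^{-2})$ to kill the boundary term), then $W\ge 0$ plus the maximum principle to get $V_{\zeta\zeta}<0$ and $V_\rho>0$ everywhere, hence convexity of the axis profile $f(\zeta)$, and finally the ALE asymptotics to force the coefficient of the $\log\rho^2$ term to vanish (this is exactly where ALE differs from ALF, where that coefficient is positive). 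Only then do the $GL(2,\mathbb{Z})$ compatibility relations between consecutive rod vectors, combined with the explicit formulas for the jumps of $F$ across fixed points, yield the slope inequalities $f'_{j-1}<0<f'_{j+1}$ that bound $n\le 3$. None of this is sketched in your proposal beyond an appeal to ``adapting'' BG.

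Your terminal case analysis also deviates from what actually works. For $n=3$ you propose to identify the solution with Euclidean Pleba\'nski--Demia\'nski and invoke the appendix on its conical singularities; that identification is nontrivial, is nowhere established in your plan, and is unnecessary: in the paper the $n=3$ regularity relations force $l_1=l_2=1$, hence $v_0=-v_3$, i.e.\ parallel semi-infinite rod vectors --- the Chen--Teo rod structure --- which is compatible only with AF, not ALE, asymptotics, so the case dies at the level of the rod structure. (The PD appendix is a supplementary explicit check, not part of the main argument.) Similarly, $n=1$ is not excluded because ``$V$ is trivial, hence flat $\mathbb{R}^4$'': the candidate is $V=V_0(\rho,\zeta-z_1)$, which is nontrivial but gives $W\equiv 0$, so the Tod metric degenerates and the case is vacuous. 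Finally, in the $n=2$ case one still has to rule out $f_1'\neq 0$ (again via parallel semi-infinite rod vectors) before the ALE normalisation forces $f_1'=0$, $l_1=2$, the $L(2,1)$ cross-section and $V=\tfrac12 V_0(\rho,\zeta-z_1)+\tfrac12 V_0(\rho,\zeta-z_2)$, i.e.\ Eguchi--Hanson; your sketch jumps to the answer without these eliminations.
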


Thus we obtain a uniqueness result: the only solution has two fixed points and corresponds to the well known Eguchi-Hanson instanton. In particular, this confirms that there are no undiscovered   toric Hermitian ALE instantons. Furthermore, since the Eguchi-Hanson instanton is indeed hyper-K\"ahler, this establishes Conjecture \ref{conj} within the class of toric Hermitian instantons.  
In this context, it is also important to note the recent paper \cite{Li2023} by Li, where it is proven that the only Hermitian non-K\"ahler ALE instanton with $\Gamma\subset SU(2)$ is Eguchi-Hanson. 
By contrast, while in this paper we assume toric symmetry, we make no assumptions on $\Gamma$ (cf. Remark \ref{remark:Gamma}).

It may be worth making the distinction between `hyper-K\"ahler' and `Hermitian non-K\"ahler' more explicit. Hyper-K\"ahler instantons are Ricci-flat and have parallel complex structures, and in the ALE case they were classified by Kronheimer \cite{Kronheimer, Kronheimer2}. A Hermitian non-K\"ahler structure on a Ricci-flat manifold refers to a Hermitian structure that is not parallel. A priori, this does not rule out hyper-K\"ahler: there may exist Ricci-flat metrics which have both parallel and non-parallel complex structures {\em with the same orientation}. Indeed, any Gibbons-Hawking metric admits both of these structures \cite{Araneda:2023qio}, being both hyper-K\"ahler and also strictly conformally K\"ahler with the same orientation. However, it turns out the conformal K\"ahler structure is not global, which is why such metrics do not appear in Theorem \ref{thm:main}, cf. Remarks \ref{remark:HK1} and \ref{remark:HK2} (note that the hyper-K\"ahler and global conformal K\"ahler structures of Eguchi-Hanson have opposite orientation).

Of course, it is still possible there exist non-Hermitian ALE instantons.
A natural setting in which to investigate the existence of gravitational instantons without special geometry is Ricci flat toric manifolds.
It turns out the Einstein equations for four-dimensional toric manifolds are equivalent to a harmonic map with a negative curvature target space  defined on a fixed Euclidean plane with prescribed boundary conditions, see e.g. ~\cite{Kunduri:2021xiv}. This is essentially the same structure that occurs for vacuum stationary-axisymmetric spacetimes in General Relativity and which allows one prove the black hole no-hair theorem. Motivated by this, a systematic study of such generic Ricci flat toric instantons has been initiated~\cite{Kunduri:2021xiv}. In particular, as in the black hole case, these can be classified in terms of certain data known as the rod structure that encodes the fixed points and degeneration of the torus action~\cite{Chen:2010zu}. In particular, for AF instantons it has been shown that there exists a unique solution for any given rod structure, however, it may possess conical singularities where the torus action degenerates~\cite{Kunduri:2021xiv}. Moreover, it has been shown that toric instantons must be ALE or ALF and that topological constraints limit the possible rod structures~\cite{Nilsson:2023ina}. Nevertheless, it remains an outstanding open problem to understand what rod structures support smooth instantons. Recently, Li and Sun have made remarkable progress on this front and shown that there exists an infinite class of new Ricci-flat non-Hermitian toric AF instantons for which the conical singularities can be removed~\cite{LiSun}. It is natural to wonder if there are analogous new non-Hermitian ALE or ALF instantons.

\subsection*{Acknowledgements}
BA is supported by the ERC Consolidator/UKRI Frontier grant TwistorQFT EP/Z000157/1. We thank Maciej Dunajski for comments on a draft of this paper.

\section{Local form of toric Hermitian instantons}

Let $(M,g)$ be a four-dimensional, Ricci-flat Riemannian manifold with a Hermitian structure, i.e. a compatible integrable almost-complex structure $J$. The fundamental 2-form $\omega_{ab}= J^c_{~a} g_{cb}$ induces an orientation $\bm\varepsilon = \omega\wedge\omega$, with an associated Hodge star operator $\star_{g}$. 

Let $\mathcal{W}$ denote the Weyl tensor of $g$. Viewed as an operator on 2-forms, $\mathcal{W}$ splits into self-dual (SD) and anti-self-dual (ASD) pieces, $\mathcal{W}^{\pm}=\frac{1}{2}(\mathcal{W}\pm \star_{g}\mathcal{W})$. 
The Riemannian version of the Goldberg-Sachs theorem (see e.g. \cite{Apostolov}) implies that $\mathcal{W}$ is one-sided type D, meaning that $\mathcal{W}^{-}$ has (generically) three distinct eigenvalues and $\mathcal{W}^{+}$ has a simple eigenvalue $\lambda$ and a double eigenvalue $-\lambda/2$ (where we allow $\lambda=0$). We then distinguish two cases:
\begin{itemize}
\item If $\mathcal{W}^{+}\neq0$, the manifold is strictly conformally K\"ahler (cf. \cite[Remark 4]{Derdzinski}): there is a non-constant scalar field $\Omega>0$ such that $(M, \hat{g}, {J})$, where $\hat{g}=\Omega^2g$, is a K\"ahler manifold. The complex structure $J$ is Hermitian non-K\"ahler with respect to $g$.
\item If $\mathcal{W}^{+}=0$, the manifold is (locally) hyper-K\"ahler. The complex structure $J$ may or may not be K\"ahler with respect to $g$.
\end{itemize}

We will be interested in the case $\mathcal{W}^{+}\neq0$ (see Remark \ref{remark:HK1} below), where the metric is strictly conformally K\"ahler with a conformal factor $\Omega$. Define $\xi_{a}=J^{b}{}_{a}\partial_{b}\Omega^{-1}$. Then $\xi_{a}$ satisfies (cf. \cite{Araneda:2023xnv}) $\nabla_{(a}\xi_{b)}=0$, $\xi^a\partial_a \Omega=0$ and $\iota_\xi \hat{\omega}= -\td \Omega$ (where $\hat\omega_{ab}= J^c_{~a} \hat g_{cb}$, and note the vector field $\xi$ always denotes $\xi_a$ with index raised by $g$), thus $\xi^a=g^{ab}\xi_b$ is a Killing vector field of both $(M, g)$ and $(M,\hat{g})$, and a Hamiltonian vector field of $(M, \hat{\omega})$. Moreover, Tod showed in \cite{Tod:2020ual} that the metric and fundamental 2-form can be written in coordinates $(\tau, z, x, y)$ as 
\begin{equation}\label{Todametric} 
\begin{aligned}
{g} &= {W}^{-1} ( \td \tau+A)^2+ {W}( \td {z}^2+ e^{{u}}( \td x^2+ \td y^2) ), \\
\omega & = (\td \tau+ A)\wedge\td z  + {W}e^{{u}} \td x \wedge \td y,
\end{aligned}
\end{equation}
where $\xi=\partial_{\tau}$, $W^{-1}=|\xi|^2_{g}$, and $z=\Omega^{-1}$. The variables $(u,W,A)$ satisfy the system
\begin{subequations}\label{Todasystem}
\begin{align}
 \td{A} ={}& - z^2 \partial_z (z^{-2} W e^u) \td x \wedge \td y - W_x \td y\wedge \td z + W_y \td x\wedge \td z, \label{monopoleA} \\
 0 ={}& W_{xx} + W_{yy} + \partial_{z}(z^2 \partial_z (z^{-2} W e^u)), \label{monopoleW} \\
 0 ={}& u_{xx}+u_{yy}+(e^{u})_{zz}, \label{eq:toda}
\end{align}
\end{subequations}
where \eqref{monopoleW} follows from $\td^2A=0$, and \eqref{eq:toda} is called the $SU(\infty)$ Toda equation. 

\begin{remark}\label{remark:scaling}
We note the following scaling freedom: under $(\tau, z, e^u)\to (\alpha \tau, \alpha z, \alpha^2 e^u)$ with $(x, y, W)$ fixed, where $\alpha\neq0$ is a constant, the metric and fundamental 2-form transform as $g\to\alpha^2 g$, $\omega\to\alpha^2 \omega$.
\end{remark}

The Ricci-flat condition on the metric \eqref{Todametric} implies that the function $W$ and the simple eigenvalue $\lambda$ of $\mathcal{W}^{+}$ are given by the following equations (cf. \cite{Araneda:2023xnv}):
\begin{equation}\label{eq:Weq0}
\begin{aligned}
\frac{W_0}{W} ={}& c, \qquad W_0 := z \left( 1- \frac{z u_z}{2} \right),  \\
 \lambda ={}& -\frac{2c}{z^3}
\end{aligned}
\end{equation}
where $c$ is a constant. If $c\neq0$, we can solve \eqref{eq:Weq0} to get
\begin{align}\label{eq:Weq}
W= c^{-1}  z \left( 1- \frac{z u_z}{2} \right), 
\qquad \Omega=\left(-\frac{\lambda}{2c} \right)^{1/3},
\end{align}
where the second equation follows from $z=\Omega^{-1}$.

\begin{remark}\label{remark:HK1} 
If $c=0$, we must have $W_0=0$, which we can solve for $u$. 
However we also see that $\lambda=0$, which implies $\mathcal{W}^{+}=0$, so the metric is (locally) hyper-K\"ahler.
Hyper-K\"ahler metrics which are also strictly conformally K\"ahler with the same orientation are necessarily Gibbons-Hawking (cf.\cite{Araneda:2023qio}): there exist coordinates $(x_1,x_2,x_3)$ (functions of $(x,y,z)$) such that \eqref{Todametric} becomes
\begin{subequations}
\begin{align}
 g ={}& W^{-1}(\td\tau+A)^{2} + W(\td x_1^2 + \td x_2^2 + \td x_3^2), \label{GH} \\
 \omega ={}& r^{-1}(x_1 \omega_1 + x_2 \omega_2 + x_3 \omega_3), \label{GHomega}
\end{align}
\end{subequations}
where $\omega_{1},\omega_{2},\omega_{3}$ are self-dual 2-forms generating the hyper-K\"ahler structure, and $r$ is defined by $r=\sqrt{x_1^2+x_2^2+x_3^2}$. In fact, $r=z=\Omega^{-1}$, so $r>0$. But the geometry with the point $r=0$ removed is not complete, at least for ALE/ALF instantons in this class. 
Since we are interested in gravitational instantons, which are by definition complete, this implies that hyper-K\"ahler metrics which are also strictly conformally K\"ahler {\em with the same orientation} (such as multi-centred solutions) are not allowed. See also Remark \ref{remark:HK2} below. 
\end{remark}

We will be interested in the case that $(M, g)$ is a toric manifold, that is, there is an (effective) action of the torus $T^2$ on $M$ that preserves $g$.  Thus we need to introduce a compatibility condition between the toric symmetry and the Hermitian structure.

\begin{definition}
$(M, g, J)$ is Hermitian toric if is Hermitian and $(M,g)$ is toric and the associated conformally K\"ahler structure $(M, \hat{g}, J)$ (when $\mathcal{W}^{+}\neq0$) is toric K\"ahler, that is,  the torus action is an isometry of $\hat{g}$ and Hamiltonian with respect to the K\"ahler form $\hat{\omega}$ (thus $\Omega$ and $J$ are also preserved by the torus symmetry).
\end{definition}

As we have seen, Hermiticity already guarantees the existence of one Hamiltonian Killing vector field $\xi$. Thus, the additional assumption of a compatible torus symmetry implies there is another Hamiltonian Killing vector field.  Tod shows that without loss of generality one can take this to be $\partial_y$ in the above~\cite{Tod:2020ual}.  Then $u$ satisfies the Toda equation \eqref{eq:toda} with an extra symmetry
\begin{equation}
u_{xx}+ (e^u)_{zz}=0,  \; 
\end{equation}
and, from \eqref{monopoleA}, $\td A= \td (F \td y)$ for some $F$.
This means we can use Ward's transformation~\cite{Ward:1990qt} which linearises the Toda equation.  Define functions  $\rho = e^{u/2}$ and  $\zeta$ by
\begin{equation}
2 \partial_x \zeta= \partial_z e^u, \qquad 2 \partial_z \zeta = - \partial_x u \; ,   \label{zetadef}
\end{equation}
where the existence of the latter follows from the Toda equation.  Now consider the coordinate change $(z,x) \to (\rho, \zeta)$. Computing the inverse of the Jacobian $\partial(\rho, \zeta)/\partial( z, x)$ implies
\[
\partial_\rho z = \rho \partial_\zeta x, \qquad \rho \partial_\rho x = - \partial_\zeta z  \; ,
\]
so the second relation implies the existence of a function $V(\rho, \zeta)$ such that
\begin{align}\label{def:V}
z= \frac{1}{2} \rho V_\rho, \qquad x= -\frac{1}{2} V_\zeta  \; ,
\end{align}
and substituting this into the first relation gives
\[
\frac{1}{\rho} (\rho V_\rho)_\rho + V_{\zeta\zeta}=0  \; .
\]
Thus $V$ is an axisymmetric harmonic function on $\mathbb{R}^3$ with cylindrical coordinates $(\rho, \zeta)$.  %
\begin{prop}[Tod metric \cite{Tod:2020ual}]\label{prop:Todmetric}
Any Ricci-flat Hermitian toric structure with $\mathcal{W}^+\neq 0$ can be written as
\begin{align}
g = W^{-1} ( \td \tau + F \td y)^2+ W \rho^2 \td y^2+ e^{2\nu} (\td \rho^2+ \td \zeta^2)  \label{eq:gTod} \\
\omega  = \frac{1}{2} ( \td \tau + F \td y) \wedge  \td ( \rho V_\rho) - \frac{1}{2} W \rho^2 \td (V_\zeta) \wedge \td y 
\end{align}
where 
\begin{subequations}\label{eq:functionsTod}
\begin{align}
W ={}& \frac{1}{2c} \left( \rho V_\rho+ \frac{V^2_\rho V_{\zeta\zeta}}{ V_{\zeta\zeta}^2+ V_{\zeta\rho}^2} \right), \label{WV} \\
e^{2\nu} ={}& \frac{W\rho^2}{4} (V_{\zeta\zeta}^2+ V_{\zeta\rho}^2), \label{nuV} \\
F ={}&  \frac{1}{2c} \left(  \frac{\rho V_\rho^2 V_{\rho \zeta}}{V_{\zeta\zeta}^2+ V_{\zeta \rho}^2} - V_\zeta \rho^2 - 2H \right) \label{FV}.
\end{align}
\end{subequations}
Here, $c \neq 0$ is a constant (see \eqref{eq:Weq}), $V(\rho, \zeta)$ is an axisymmetric harmonic function on $\mathbb{R}^3$ and $H(\rho, \zeta)$ is the harmonic conjugate to $V$ defined by
\begin{equation}
H_\rho = -\rho V_\zeta, \qquad H_\zeta= \rho V_\rho  \; .  \label{eq:Hdef}
\end{equation}
\end{prop}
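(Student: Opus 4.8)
The plan is to start from the local normal form already established in the text and push the toric reduction through explicitly. By the discussion preceding the proposition, Ricci-flatness together with Hermiticity and $\mathcal{W}^+\neq 0$ puts the metric in Tod's form \eqref{Todametric} with $(u,W,A)$ solving \eqref{Todasystem}, and \eqref{eq:Weq0}, \eqref{eq:Weq} express $W$ in terms of $u$ alone (using $c\neq 0$). The compatible torus action supplies a second Hamiltonian Killing field which, following \cite{Tod:2020ual}, may be taken to be $\partial_y$; hence $u$ and $W$ depend only on $(x,z)$, the Toda equation reduces to $u_{xx}+(e^u)_{zz}=0$, and \eqref{monopoleA} forces $\td A=\td(F\,\td y)$ for some function $F(x,z)$, the exact part of $A$ being absorbed into a shift of $\tau$.

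Next I would run Ward's transformation exactly as in the text: set $\rho=e^{u/2}$, introduce $\zeta$ via \eqref{zetadef}, and pass to coordinates $(\rho,\zeta)$, obtaining the harmonic potential $V(\rho,\zeta)$ with $z=\tfrac12\rho V_\rho$, $x=-\tfrac12 V_\zeta$ and $\tfrac1\rho(\rho V_\rho)_\rho+V_{\zeta\zeta}=0$. The first substantive computation is to rewrite the transverse part $W(\td z^2+e^u(\td x^2+\td y^2))$ of \eqref{Todametric} in the new coordinates. Differentiating $z=\tfrac12\rho V_\rho$ and $x=-\tfrac12 V_\zeta$ and eliminating $V_{\rho\rho}$ with the harmonic equation gives $\td z=-\tfrac12\rho V_{\zeta\zeta}\,\td\rho+\tfrac12\rho V_{\rho\zeta}\,\td\zeta$ and $\td x=-\tfrac12 V_{\rho\zeta}\,\td\rho-\tfrac12 V_{\zeta\zeta}\,\td\zeta$; since $e^u=\rho^2$ the cross terms cancel and $\td z^2+e^u\td x^2=\tfrac14\rho^2(V_{\zeta\zeta}^2+V_{\zeta\rho}^2)(\td\rho^2+\td\zeta^2)$, which is the spatial part of \eqref{eq:gTod} with $e^{2\nu}$ as in \eqref{nuV} and $W\rho^2$ multiplying $\td y^2$.

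For \eqref{WV} I would compute $zu_z$ in the new variables. Since $u=2\log\rho$ depends on $(\rho,\zeta)$ only through $\rho$, the chain rule gives $u_z=\tfrac2\rho\,\partial_z\rho|_x$, and $\partial_z\rho|_x$ is read off by inverting the Jacobian $\partial(z,x)/\partial(\rho,\zeta)$, whose entries are already known from $z=\tfrac12\rho V_\rho$, $x=-\tfrac12 V_\zeta$ (again using harmonicity to simplify $\partial_\rho z=-\tfrac12\rho V_{\zeta\zeta}$). This yields $u_z=-4V_{\zeta\zeta}/[\rho^2(V_{\zeta\zeta}^2+V_{\zeta\rho}^2)]$, and feeding $z=\tfrac12\rho V_\rho$ together with this into $W=c^{-1}z(1-\tfrac12 zu_z)$ from \eqref{eq:Weq} gives \eqref{WV}; then \eqref{nuV} follows completely.

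The remaining and, I expect, most delicate piece is \eqref{FV}. From \eqref{monopoleA} with $\partial_y$ Killing one has $F_x=-z^2\partial_z(z^{-2}We^u)$ and $F_z=W_x$; I would transform these into $(\rho,\zeta)$ derivatives, at which point the combination $\rho V_\zeta=-H_\rho$, $\rho V_\rho=H_\zeta$ from \eqref{eq:Hdef} organizes the right-hand sides so that they become exact, and integrating recovers $F=\tfrac1{2c}\big(\rho V_\rho^2 V_{\rho\zeta}/(V_{\zeta\zeta}^2+V_{\zeta\rho}^2)-V_\zeta\rho^2-2H\big)$ up to an additive constant. The bookkeeping in this last step — keeping track of third derivatives of $V$, using the harmonic equation repeatedly, and checking that the one-form being integrated is genuinely closed so that $F$ (and the $H$-term in particular) is well defined — is where the real work lies; it is essentially a repackaging of the corresponding computation in \cite{Tod:2020ual}. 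Finally, substituting $z=\tfrac12\rho V_\rho$, $x=-\tfrac12 V_\zeta$ and $A=F\,\td y$ into $\omega=(\td\tau+A)\wedge\td z+We^u\,\td x\wedge\td y$ from \eqref{Todametric} immediately gives the stated expression for $\omega$, completing the proof.
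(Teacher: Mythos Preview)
Your proposal is correct and follows exactly the route the paper sketches in the paragraphs preceding the proposition: the paper sets up the Toda form, imposes the extra $\partial_y$ symmetry, runs Ward's transformation to produce the harmonic potential $V$, and then simply \emph{states} the proposition as Tod's result without carrying out the explicit change of variables. Your computations of $\td z^2+e^u\,\td x^2$ and of $u_z$ via the inverse Jacobian are accurate and give \eqref{nuV} and \eqref{WV} directly, and your outline for \eqref{FV} (transforming $F_x=-z^2\partial_z(z^{-2}We^u)$, $F_z=W_x$ to $(\rho,\zeta)$ and recognising the $H$-primitive) is the right way to finish; this is precisely the bookkeeping that both the paper and \cite{Tod:2020ual} suppress.
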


Therefore, we see that any Ricci-flat, Hermitian toric structure with $\mathcal{W}^{+}\neq0$ is fully determined by an axisymmetric harmonic function on $\mathbb{R}^3$.  The classification of such geometries therefore reduces to a global analysis of this class of metrics.

\section{ALE instantons}

We first introduce ALE instantons in a general setting.

\begin{definition}\label{def:ALE}
 $(M, g)$ is ALE of order $\tau>0$ if:
 \begin{itemize}
 \item It has one end diffeomorphic to $\mathbb{R}\times S$ where $S=S^3/\Gamma$ and $\Gamma$ is a finite subgroup of $O(4)$ acting freely.
 \item $S$ admits a Riemannian  metric $\gamma$ with constant curvature $+1$.
 \item The metric on the end has behaviour
 \begin{equation}
 g = \td r^2+ r^2 \gamma+  h
 \end{equation}
 where $r>r_0$ for some constant $r_0>0$,  $| {\nabla}^k h|= O(r^{-\tau-k})$ as $r\to \infty$ and the connection $\nabla$ and norm are with respect to the flat  asymptotic metric $\td r^2+ r^2 \gamma$.
 \end{itemize}
\end{definition}

\begin{remark}\label{remark:BKN}
By results from Bando-Kasue-Nakajima \cite{BKN}, if $(M, g)$ is ALE Ricci flat then $\tau = 4$.  This implies the norm of the Weyl tensor is $|\mathcal{W}| = O(r^{-6})$. 
\end{remark}

We now specialise to Hermitian instantons with $\mathcal{W}^{+}\neq0$. We will first establish that the asymptotic region corresponds to the conformal factor $\Omega\to 0$. To see this, note that the Hermitian condition with $\mathcal{W}^{+}\neq0$  implies $\mathcal{W}^{+}$ has a simple non-zero eigenvalue $\lambda$, and from remark \ref{remark:BKN} we deduce that $\lambda= O(r^{-6})$. Since we see from \eqref{eq:Weq} that  $\Omega \propto \lambda^{1/3}$, it follows that 
\begin{align}\label{Omega}
\Omega = O(r^{-2}).
\end{align}
We now establish a result that will be important in the global analysis.

\begin{prop}\label{prop:c}
If $(M, g)$ is a complete Hermitian ALE instanton, the constant $c$ in \eqref{eq:Weq} is $c<0$. 
\end{prop}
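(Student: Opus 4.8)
The plan is to combine the asymptotic behaviour of the conformal factor $\Omega$ with the sign structure of the data in the Tod metric \eqref{eq:gTod}--\eqref{eq:functionsTod}, making use of completeness to fix the sign. First I would recall that $W^{-1} = |\xi|^2_g \ge 0$ everywhere, since $W^{-1}$ is the squared norm of the Killing field $\xi = \partial_\tau$; moreover $W^{-1}>0$ on the open dense set where $\xi$ does not vanish, so $W>0$ there. From \eqref{eq:Weq0} we have $W_0/W = c$ with $W_0 = z(1 - z u_z/2)$, and $z = \Omega^{-1} > 0$; hence the sign of $c$ equals the sign of $W_0 = z(1 - z u_z /2)$ wherever $W>0$. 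So the task reduces to showing $W_0 < 0$, equivalently $1 - z u_z/2 < 0$, i.e. $z u_z > 2$, at least somewhere (and then, since $c$ is a constant and $W>0$ on a dense set, everywhere).

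The key is to evaluate this in the asymptotic region. By \eqref{Omega}, $\Omega = O(r^{-2})$, so $z = \Omega^{-1} \to \infty$ as $r\to\infty$; I would compute the leading asymptotics of $W$, $u$ and hence $W_0$ there. Concretely, in the asymptotic ALE region the metric \eqref{Todametric} must approach the flat orbifold metric on $\mathbb{R}^4/\Gamma$, whose Tod form is explicitly known (flat space is the trivial Gibbons–Hawking or, better, one can write $\mathbb{R}^4$ in the Tod coordinates $(\tau,z,x,y)$ directly). Matching \eqref{Todametric} to the flat model to leading order in $1/r$ gives the leading behaviour of $e^u$, $W$ and $z$ as functions of $(x,y,z)$ or, after Ward's transformation, of $V(\rho,\zeta)$; in particular $V$ asymptotes to the harmonic function of flat space. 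Plugging this into $W_0 = z(1-zu_z/2)$ (or equivalently into \eqref{WV}, using $W = c^{-1} W_0$ and $W>0$) then determines the sign of $c$. The expectation is that for flat $\mathbb{R}^4$ the relevant harmonic function is of Eguchi–Hanson / flat type for which $z u_z/2 \to 2$ from above at large $z$, forcing $W_0<0$ and hence $c<0$; the opposite sign would correspond to the ALF-type falloff treated by Biquard–Gauduchon.

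The main obstacle I anticipate is carrying out the asymptotic matching cleanly: one must identify precisely which axisymmetric harmonic $V$ on $\mathbb{R}^3$ corresponds to the flat ALE cone at infinity, and extract the subleading term that controls $1 - zu_z/2$, since the leading term may give exactly $z u_z /2 = 2$ with the sign decided at next order. An alternative, possibly cleaner route that avoids delicate expansions: argue by contradiction. If $c>0$, then $W = c^{-1} W_0 = c^{-1} z(1 - z u_z /2)$, and one examines where $W$ can vanish or change sign; combined with the requirement that $W^{-1}=|\xi|^2_g$ extends smoothly over the fixed-point set of $\xi$ and with the global behaviour forced by ALE asymptotics (in particular $\Omega\to 0$, $z\to\infty$ at infinity and $z$ bounded below on $M$), one should be able to rule out $c>0$ and also $c=0$ (the latter already excluded by Remark \ref{remark:HK1}, as it forces the metric to be a hyper-Kähler Gibbons–Hawking metric that is incomplete once $r=0$ is removed). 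I would present whichever of these two arguments turns out to need the fewest explicit computations, but in either case the crux is controlling the sign of $1 - z u_z/2$ using the ALE asymptotic data.
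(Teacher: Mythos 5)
There is a genuine gap, and it sits exactly at the point you flag as the ``main obstacle.'' Your reduction of the claim to the sign of $W_0=z(1-\tfrac{1}{2}zu_z)$ is fine, but the asymptotic-matching step cannot decide that sign: from \eqref{eq:Weq0} one has identically $1-\tfrac{1}{2}zu_z=cW/z$, so $W_0=cW$, and matching to the flat cone at infinity only yields the leading statement $zu_z\to 2$ (note: $zu_z\to 2$, not $zu_z/2\to 2$), i.e.\ $W_0\to 0$; the subleading term whose sign you need \emph{is} $cW/z$ with $W/z>0$, so its sign is the sign of $c$ by definition, and the argument is circular. The asymptotic model $\mathbb{R}^4/\Gamma$ carries no information about this sign: $c$ enters only through the decaying Weyl eigenvalue $\lambda=-2c/z^3=O(r^{-6})$, so it behaves like a mass-type parameter, and both signs are consistent with ALE decay at the level of formal expansions at infinity. (Relatedly, the sign of $c$ is not what distinguishes ALE from ALF: $c<0$ holds in the ALF case as well, cf.\ Remark \ref{rem:ALF}.) A genuinely global ingredient is therefore unavoidable.

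Your alternative ``contradiction'' route points in the right direction but is missing its engine: you never identify an equation that transports the asymptotic information ($\Omega\to 0$, $\Omega>0$) into the interior. The paper's proof supplies exactly this: a direct computation in the Toda form \eqref{Todametric} shows $\Delta_g\Omega=-2c\,\Omega^4$, and integrating the identity $\nabla^a(\Omega\nabla_a\Omega)=-\Omega\Delta_g\Omega+|\nabla\Omega|^2_g$ over $M$, the boundary flux at infinity vanishes because $\Omega=O(r^{-2})$ while $\td A_r=O(r^3)$, leaving $\int_M\bigl(|\nabla\Omega|^2_g+2c\,\Omega^5\bigr)\td\mathrm{vol}_g=0$ and hence $c<0$ since $\Omega>0$ is nonconstant. (With this PDE in hand one could also argue via the maximum principle: for $c\geq 0$ the positive function $\Omega$, tending to $0$ at infinity, would attain an interior maximum while being subharmonic or harmonic, a contradiction.) Without the equation for $\Omega$, or some equivalent global identity, your observations about where $W$ vanishes and about smooth extension over the fixed-point set of $\xi$ do not close the argument.
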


\begin{proof}
First observe that from the Toda form of the metric \eqref{Todametric} it is easily checked that the conformal factor $\Omega=z^{-1}$ satisfies
\begin{equation*}
\Delta_g \Omega= - 2 c \Omega^4
\end{equation*}
where $\Delta_g := -g^{ab} \nabla_a \nabla_b$ is the Laplacian. Hence integrating the basic identity
\[
\nabla^a ( \Omega \nabla_a \Omega) = - \Omega \Delta_g \Omega+ | \nabla \Omega |^2_g
\]
implies
\[
\int_M \left(  | \nabla \Omega |^2_g+ 2c \Omega^5 \right) \td \text{vol}_g = \tfrac{1}{2}  \text{lim}_{r\to \infty} \int_{S_r} \partial_n\Omega^2 \td A_r
\]
where $S_r \cong S$ is a surface of constant $r$ in the asymptotic end with unit normal $n$ and $\td A_r$ is the induced volume form on $S_r$.  But ALE implies $n \sim \partial_r$ and $\td A_r= O(r^3)$ and $\Omega= O(r^{-2})$ so the integral over $S_r$ is $O(r^{-2})$ which therefore vanishes in the limit $r\to \infty$.  Therefore, since $\Omega>0$ and is non-constant by assumption, this implies $c<0$.
\end{proof}

\begin{remark}\label{rem:ALF}
The above proposition is also valid for ALF instantons.  In this case one also has an asymptotic end diffeomorphic to $\mathbb{R}\times S$, although $S$ can also include $S^1\times S^2$ and the metric near infinity is modelled on $\td r^2+ r^2 \gamma+ \eta^2$, where $\eta$ is a 1-form on $S$, $\gamma$ is a $T$-invariant metric on $\text{ker}\, \eta$ and $T$ a vector field on $S$ such that $\eta(T)=1$ and $\iota_T \td \eta=0$, and the fall-off is of order $\tau=1$~\cite{Biquard:2021gwj}.  Then the fall-off of the Weyl tensor is $\lambda= O(r^{-3})$ and hence $\Omega= O(r^{-1})$, which together with the fact that $n\sim \partial_r$ and $\td A_r= O(r^2)$, implies that the boundary term in the proof of Proposition \ref{prop:c} also vanishes.  The fact that $c<0$ was implicitly assumed in the original analysis of~\cite{Biquard:2021gwj}, although this was later justified~\cite{Biquard:2023gyl}.
\end{remark}

\begin{remark}\label{remark:HK2}
Note Proposition \ref{prop:c} implies the case $c=0$ is not allowed for complete manifolds. Therefore, the Hermitian structure is non-trivially one-sided type D, that is, the SD part of the Weyl tensor $\mathcal{W}^{+} \neq 0$. This means for the problem at hand we do not need to consider a global analysis of the $c=0$ case, which as noted above (see remark \ref{remark:HK1}) correspond to the Gibbons-Hawking metrics. Of course, it is well known that the Gibbons-Hawking metrics do extend to complete ALE hyper-K\"ahler metrics (and also ALF), the reason they do not appear in our classification is that they are not globally Hermitian non-K\"ahler.
\end{remark}

We now introduce toric instantons in the ALE setting.

\begin{definition}
We say that $(M,g)$ is toric ALE, if it is toric and ALE,  and the torus action on the end is a torus action on $S$ that preserves $\gamma$. 
\end{definition}

\begin{remark}\label{remark:Gamma}
The toric condition implies that $\Gamma$ must be a cyclic group $\mathbb{Z}_p$, and hence $S=S^3/\Gamma$ must be a lens space $L(p,q)$.
\end{remark}

We are now ready to specialise to toric Hermitian instantons. We will need a more detailed form of the asymptotic behaviour of $\Omega$ than that given by \eqref{Omega}. This can obtained by adapting the discussion in \cite[Prop. 1.5]{Biquard:2021gwj} to ALE asymptotics. 

\begin{proposition}\label{prop:falloffOmega}
Let $(M,g)$ be a toric Hermitian ALE instanton with $\mathcal{W}^{+}\neq0$. Then the conformal factor $\Omega$ and the inverse squared-norm $W$ of $\xi$ have the asymptotic behaviour
\begin{align}
\Omega ={}& \frac{k}{r^2} + O({r^{-4}}), \label{Omega2} \\
W ={}& \frac{k^2}{4 r^2} + O({r^{-4}}), \label{eq:asymptoticsW}
\end{align}
where $k\neq0$ is a constant.
\end{proposition}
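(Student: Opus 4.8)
The plan is to compute the asymptotic expansion of $\Omega$ directly from the field equation it satisfies, namely $\Delta_g \Omega = -2c\,\Omega^4$, established in the proof of Proposition \ref{prop:c}. Since $\Omega>0$ is a globally defined function preserved by the torus action and $\Omega \to 0$ at the ALE end, on the end we may expand in inverse powers of $r$. The leading behaviour is $\Omega = O(r^{-2})$ from \eqref{Omega}, so write $\Omega = k_0 r^{-2} + k_1 r^{-3} + \dots$ with coefficients that are a priori functions on $S=S^3/\Gamma$. Feeding this into $\Delta_g \Omega = -2c\,\Omega^4$ and using that $\Delta_g$ agrees with the flat ALE Laplacian $\Delta_0$ of $\td r^2 + r^2\gamma$ up to corrections of order $r^{-4}$ (since $h = O(r^{-4})$ by Remark \ref{remark:BKN}), one finds that at the first nontrivial orders the nonlinear right-hand side is $O(r^{-8})$ and hence negligible: $\Omega$ is \emph{asymptotically harmonic} to the order of interest. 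The equation $\Delta_0 (r^{-2-j} Y) = 0$ on the cone over $(S,\gamma)$ forces $Y$ to be a constant (the $r^{-2}$ mode is the leading decaying harmonic on the flat cone, with constant angular profile, because $(S,\gamma)$ has constant curvature $+1$ so its Laplacian has no eigenvalue producing an $r^{-3}$ harmonic times a nonconstant eigenfunction that is also torus-invariant — more precisely, torus invariance plus the ALE normalization kills the subleading angular modes). This gives $\Omega = k r^{-2} + O(r^{-3})$, and a parity/reflection argument on the asymptotic cone (or simply matching the next order of $\Delta_g\Omega=O(r^{-8})$) upgrades this to $\Omega = k r^{-2} + O(r^{-4})$, which is \eqref{Omega2}. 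Here $k\neq 0$ since $\Omega$ is non-constant and the $r^{-2}$ coefficient cannot vanish without forcing faster decay, contradicting $\lambda = O(r^{-6})$ being the exact Weyl fall-off.

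Having obtained \eqref{Omega2}, I would derive \eqref{eq:asymptoticsW} from the relation between $W$ and $\Omega$. Recall $z = \Omega^{-1}$ and, from \eqref{eq:Weq}, $W = c^{-1} z(1 - \tfrac{1}{2} z u_z)$, equivalently $W = W_0/c$ with $W_0 = z(1 - \tfrac12 z u_z)$. Rather than track $u$ directly, it is cleaner to use the geometric identity $W^{-1} = |\xi|_g^2$ together with $\xi_a = J^b{}_a \partial_b \Omega^{-1}$, which gives $|\xi|_g^2 = |\td \Omega^{-1}|_g^2 = |\td z|_g^2$ (since $J$ is a $g$-isometry on the cotangent space). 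Thus $W = |\td z|_g^{-2} = |\td \Omega^{-1}|_g^{-2}$. Asymptotically, using \eqref{Omega2}, $\Omega^{-1} = k^{-1} r^2 + O(1)$, so $\td \Omega^{-1} = 2k^{-1} r\, \td r + (\text{lower order})$ and $|\td \Omega^{-1}|_g^2 = |\td \Omega^{-1}|_0^2 + O(r^{-4}\cdot r^2) = 4 k^{-2} r^2 + O(r^{-2})$ — where I use that the metric correction $h$ contributes at relative order $r^{-4}$ and $|\td r|_0 = 1$. Inverting, $W = \tfrac{k^2}{4 r^2} + O(r^{-4})$, which is \eqref{eq:asymptoticsW}. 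This route has the advantage of not requiring a separate asymptotic analysis of the Toda potential $u$.

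The main obstacle I anticipate is making rigorous the claim that the subleading angular modes in the expansion of $\Omega$ are absent, i.e. that the $O(r^{-3})$ term genuinely vanishes and that the error is $O(r^{-4})$ rather than merely $O(r^{-3})$. This is where the precise ALE structure and the torus symmetry must be used carefully: one needs that the only torus-invariant solution of $\Delta_0 \phi = 0$ on the flat cone $\R_{>0}\times (S^3/\Gamma)$ with $\phi = O(r^{-2})$ and $\phi \to 0$ is (to the relevant order) a constant multiple of $r^{-2}$, together with a bootstrap that promotes the a priori $O(r^{-\tau-k}) = O(r^{-4-k})$ regularity of $h$ into the stated expansion of $\Omega$. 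The cleanest way is to mirror the argument of \cite[Prop. 1.5]{Biquard:2021gwj}, where the analogous statement is proved in the ALF case: one expands $\Omega$ and $W$ in the asymptotic chart, plugs into the Toda system \eqref{Todasystem} and the Ricci-flat relations \eqref{eq:Weq0}, and shows order by order that the freedom is exhausted by the single constant $k$. I would carry this out adapting the decay rates ($\tau = 4$ instead of $\tau=1$, $\td A_r = O(r^3)$, $\lambda = O(r^{-6})$), and note that if one prefers to avoid the elliptic bootstrap entirely, the identity $W = |\td\Omega^{-1}|_g^{-2}$ together with the $C^{k}$-control on $h$ from Definition \ref{def:ALE} and Remark \ref{remark:BKN} already yields \eqref{eq:asymptoticsW} once \eqref{Omega2} is in hand, so the whole proposition reduces to establishing \eqref{Omega2}.
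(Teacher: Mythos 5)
Your route to \eqref{Omega2} --- a direct asymptotic analysis of $\Delta_g\Omega=-2c\,\Omega^4$ --- is genuinely different from the paper's, which instead solves the first-order twistor/conformal Killing--Yano system asymptotically (Appendix \ref{appendix:CKY}) and reads off $\Omega=2|Z|^{-1}$; your derivation of \eqref{eq:asymptoticsW} from $W^{-1}=|\td\Omega^{-1}|_g^{2}$ is the same as the paper's. However, there is a genuine gap at the one point where the proposition says more than \eqref{Omega}: the claim $k\neq0$. You argue that $k=0$ would contradict ``$\lambda=O(r^{-6})$ being the exact Weyl fall-off'', but Remark \ref{remark:BKN} gives only an \emph{upper bound} $|\mathcal{W}|=O(r^{-6})$; nothing proved earlier says this rate is sharp, so faster decay of $\Omega$ is not excluded by your reasoning and the step fails as written. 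In the paper, $k\neq0$ is forced structurally: the torus-invariant flat-space CKY tensors are either parallel with bounded norm or satisfy $|Z^0|=2|k_1|r^2(1+O(r^{-2}))$ with $k_1\neq0$, and since $|Z|=2\Omega^{-1}$ grows at least like $r^2$ the parallel branch is excluded. To stay within your PDE approach you would need a substitute, e.g. a barrier/minimum-principle argument: since $c<0$ by Proposition \ref{prop:c}, $g^{ab}\nabla_a\nabla_b\Omega=2c\,\Omega^4\le 0$, so the positive function $\Omega$ is superharmonic on the end and comparison with the decaying harmonic of the asymptotic cone bounds it below by a positive multiple of $r^{-2}$, pinning $k\neq0$. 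No such argument appears in your proposal.

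A second, lesser issue is that the analytic core of \eqref{Omega2} is deferred rather than proved: the passage from ``$\Omega=O(r^{-2})$ solves $\Delta_g\Omega=O(r^{-8})$'' to an actual expansion $\Omega=kr^{-2}+O(r^{-4})$, with the $O(r^{-3})$ mode absent and with derivative control on the remainder, is exactly the content you postpone to an unspecified elliptic bootstrap ``mirroring'' Biquard--Gauduchon. Note that the derivative control is not optional: to get \eqref{eq:asymptoticsW} from $W^{-1}=|\td\Omega^{-1}|_g^{2}$ you must differentiate the error term, so a $C^0$ expansion of $\Omega$ does not suffice. The paper sidesteps the second-order bootstrap entirely by integrating the first-order CKY system in $r$, which simultaneously yields $Z=Z^0+O(r^{-2})$, rules out the intermediate modes (your torus-invariance observation about $l=1$ harmonics is the right idea, but it must be combined with an expansion theorem, not just asserted), and carries the regularity needed for the $W$ step. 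Your scheme is viable, but these two ingredients --- the nonvanishing of $k$ and a genuine expansion-with-derivatives statement --- must be supplied before it constitutes a proof.
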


\begin{proof}
We use the fact that the conformally K\"ahler condition is equivalent to the existence of a solution to the two-index twistor equation \cite[Lemma 2.1]{Dunajski:2009dqa}, and in Appendix \ref{appendix:CKY} we study this equation for ALE manifolds. This in turn is equivalent to the conformal Killing-Yano (CKY) equation. The norm squared of a CKY tensor $Z$ is $|Z|^2=4\Omega^{-2}$ \cite{Araneda:2023qio}. In Appendix \ref{appendix:CKY} we prove that $Z=Z^0+O(r^{-2})$, where $Z^0$ is a CKY tensor in the asymptotic flat metric $\td r^2+r^2 \gamma$, and that $|Z^0|=2k^{-1} r^2 (1+O(r^{-2}))$, where $k\neq 0$ is a constant. Thus $|Z|=|Z^0|(1+O(r^{-4}))$, and since $\Omega=2|Z|^{-1}$, we get
$\Omega=\frac{k}{r^2}(1+O(r^{-2}))$, so \eqref{Omega2} follows. 
To show \eqref{eq:asymptoticsW}, we use $\xi_{a}=J^{b}{}_{a}\partial_{b}\Omega^{-1}$, then $W^{-1}=|\xi|^{2}=|\td\Omega^{-1}|^{2}$ and \eqref{eq:asymptoticsW} follows from \eqref{Omega2}.
\end{proof}

\begin{remark}
The asymptotics for $\Omega$ given by \eqref{Omega2} implies that the  K\"ahler metric $\hat{g}= \Omega^2 g$ has the asymptotic behaviour
\begin{align}
\hat{g} = k^2 ( \td \hat{r}^2 + \hat{r}^2 \gamma + O(\hat{r}^4))  \; ,
\end{align}
where $\hat{r} := 1/r \to 0$ and the error term is computed with respect to $\hat{g}$. Therefore, we can compactify $(M, \hat{g})$  by adding a point corresponding to $\hat{r}=0$, although this is an orbifold point since $S=S^3/\Gamma$ for a cyclic group $\Gamma$.  This was previously noted in~\cite{Li2023}.
\end{remark}

Since $\mathcal{W}^{+}\neq0$ and we now assume toric symmetry, the Tod form of the metric (Proposition \ref{prop:Todmetric}) applies, so the geometry is encoded in an axisymmetric harmonic function $V$ on $\mathbb{R}^3$. The ALE condition allows us to deduce the asymptotic form of $V$, as follows. 

\begin{proposition}\label{prop:Vinfty}
Let $(M,g)$ be a toric Hermitian ALE instanton ($\mathcal{W}^{+}\neq0$). Let 
\begin{align}\label{def:V0}
V_0(\rho,\zeta):=2 R - \zeta  \log \left(\frac{R+\zeta}{R-\zeta}\right), \qquad R:=\sqrt{\rho^2+\zeta^2}.
\end{align}
If $\rho \geq \rho_0$ for a constant $\rho_0>0$ (i.e. strictly away from the axis),  then as  $R\to\infty$ the function $V$ in \eqref{eq:functionsTod} is
\begin{align}\label{Vinfinity}
V= V_0+ O(\log R).
\end{align}
\end{proposition}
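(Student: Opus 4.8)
The plan is to convert the asymptotic information of Remark~\ref{remark:BKN} and Proposition~\ref{prop:falloffOmega} into control on the harmonic function $V$, using the exact identities $z=\Omega^{-1}=\tfrac12\rho V_\rho$ and $x=-\tfrac12 V_\zeta$ from \eqref{def:V} together with the formulas \eqref{eq:functionsTod} and the requirement that the Tod metric \eqref{eq:gTod} be ALE. A short computation records the relevant facts about the explicit $V_0$ of \eqref{def:V0}: it is harmonic on $\mathbb{R}^3$ off the axis, $\tfrac12\rho\,\partial_\rho V_0=R$, $-\tfrac12\partial_\zeta V_0=\tfrac12\log\tfrac{R+\zeta}{R-\zeta}$, and $V_{0,\zeta\zeta}^2+V_{0,\zeta\rho}^2=4/\rho^2$; in particular the complicated term in \eqref{WV} degenerates to $-2R$, so $V_0$ corresponds to $W\equiv 0$. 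Thus $V_0$ is the degenerate `flat-cone background', the genuine solution must differ from it, and the claim is that the difference is only $O(\log R)$ away from the axis.

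\emph{Step 1 (asymptotic dictionary).} From Proposition~\ref{prop:falloffOmega}, $z=\Omega^{-1}=\tfrac{r^2}{k}+O(1)$ and $W=\tfrac{k^2}{4r^2}+O(r^{-4})=\tfrac{k}{4z}\bigl(1+O(z^{-1})\bigr)$, so $\rho V_\rho=2z\to\infty$. Matching the two-dimensional block $e^{2\nu}(\td\rho^2+\td\zeta^2)$ and the torus block $W^{-1}(\td\tau+F\td y)^2+W\rho^2\td y^2$ of \eqref{eq:gTod} to the model $\td r^2+r^2\gamma+O(r^{-2})$ on $\mathbb{R}\times(S^3/\mathbb{Z}_p)$ (Remark~\ref{remark:Gamma}), one identifies the end with the region $R=\sqrt{\rho^2+\zeta^2}\to\infty$, and the axis $\rho=0$ with the fixed-point set of the two one-parameter subgroups of the torus acting on the lens space; hence near infinity the $\zeta$-axis splits into two semi-infinite rods which, after a translation in $\zeta$, meet at $\zeta=0$, exactly as for $V_0$.

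\emph{Step 2 (leading part of $V$, and conclusion).} Using the dictionary, the identity $\tfrac12\rho V_\rho=\Omega^{-1}$ forces the $O(R)$ part of $V$ to be $2R$ up to the scaling freedom of Remark~\ref{remark:scaling}: since the model metric is the \emph{exact} flat cone over a round $S^3/\mathbb{Z}_p$ (no squashing), the leading term of $z$ is a function of $r$ alone, and among $\lambda R-\zeta\log\tfrac{R+\zeta}{R-\zeta}$ only $\lambda=2$ gives $\tfrac12\rho\partial_\rho(\,\cdot\,)=R$ with no angular dependence; likewise the behaviour of $x=-\tfrac12 V_\zeta$ on the axis, dictated by the smooth degeneration of the torus action on the round lens space, fixes the coefficient of the $\zeta\log\tfrac{R+\zeta}{R-\zeta}$ term. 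Thus $V$ and $V_0$ have the same `rod data at infinity', so $v:=V-V_0$ is harmonic and $o(R)$ on $\{\rho\ge\rho_0\}$, with at worst constant-coefficient logarithmic singularities on the two axis rays (a shift of the rod weights). An axisymmetric harmonic function with these properties is a finite combination of $1$, $\log(R-\zeta)$, $\log(R+\zeta)$ and decaying solid harmonics $R^{-\ell-1}P_\ell(\zeta/R)$, each of which is $O(\log R)$ uniformly on $\{\rho\ge\rho_0\}$ as $R\to\infty$; hence $v=O(\log R)$ there, which is \eqref{Vinfinity}.

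The main obstacle is Step~2: showing that the $O(R)$ part of $V$ is precisely $V_0$ and not some other harmonic function of the same growth. This is the point at which \cite{Biquard:2021gwj} invoked toric K\"ahler geometry and the moment polytope of the compactified K\"ahler orbifold; here one must argue directly, reading the rod data at infinity off the degeneration of the $T^2$-action on the round lens space $S^3/\mathbb{Z}_p$ and checking that, through \eqref{eq:functionsTod}, it is consistent only with the background $V_0$ and its logarithmic perturbations. The remaining technical point is to make all the error estimates uniform as one approaches the axis, so that `$O(\log R)$ for $\rho\ge\rho_0$' is genuine rather than merely pointwise in the angular variable.
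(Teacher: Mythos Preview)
Your proposal is an outline rather than a proof, and the gap you yourself flag in Step~2 is the whole content of the proposition. You assert that the $O(R)$ part of $V$ is $V_0$ by appealing to ``rod data at infinity'' and the round geometry of the lens space, but you never establish the relation between the Tod coordinates $(z,x)$ and the Weyl--Papapetrou coordinates $(\rho,\zeta)$; without that, the identity $\tfrac12\rho V_\rho=z$ tells you nothing about $V$ as a function of $(\rho,\zeta)$. Your claim that $v=V-V_0$ is $o(R)$ with constant-coefficient log singularities on the axis is precisely what needs proving, and your subsequent assertion that such a $v$ is a \emph{finite} combination of $1$, $\log(R\pm\zeta)$ and decaying solid harmonics is unjustified (and in any case would not by itself give a uniform $O(\log R)$ bound on $\{\rho\ge\rho_0\}$).

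The paper's argument is entirely different and completely constructive. It does not use harmonic-function representation theory at all. Instead it works in the Toda picture: from \eqref{eq:Weq} and the asymptotics of $W$ one gets $u_z=2/z+O(z^{-3})$, integrates to $e^u=\alpha(x)z^2(1+O(z^{-2}))$, and then the \emph{Toda equation} forces $\alpha(x)=\mathrm{sech}^2(x)$ (after normalisation). This is the step you are missing: the angular profile is not fixed by rod data but by the nonlinear PDE. From $\rho=e^{u/2}$ and \eqref{zetadef} one then reads off $\rho=z\,\mathrm{sech}(x)(1+O(z^{-2}))$, $\zeta=z\tanh(x)(1+O(z^{-2}))$, inverts to $z=R+O(R^{-1})$, $x=\mathrm{arcsinh}(\zeta/\rho)+O(R^{-1})$ on $\{\rho\ge\rho_0\}$, and finally substitutes into \eqref{def:V} to compute $\partial_R V$ and $\partial_X V$ directly, integrating to $V=V_0+O(\log R)$. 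No appeal to the global structure of harmonic functions is needed; the error control comes straight from the explicit coordinate change.
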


\begin{proof}
First, rearrange the equation for $W$ in \eqref{eq:Weq} to get
\[
u_z = \frac{2}{z}- \frac{2 c W}{z^2}.
\]
From \eqref{Omega2} we see that $z^{-1}=\Omega= k/r^2+ O(r^{-4})$ and hence the asymptotic end $r\to \infty$ corresponds to $z\to \infty$. Then, using  \eqref{eq:asymptoticsW} we deduce $u_z = 2/z + O(z^{-3})$ as $z\to \infty$.
Integrating this gives $e^u = \alpha(x) z^2 \left(1 + O(z^{-2})\right)$.
Then the Toda equation implies $(\log \alpha)''+2 \alpha=0$ which has solution $\alpha = a^2 \text{sech}^2 (a x)$ for some constant $a>0$ where we have set a translation constant in $x$ to zero.\footnote{If we do not fix this constant then from \eqref{def:V} we see this adds an affine function in $\zeta$ to the asymptotics for $V$.} Without loss of generality we can set $a=1$ (by rescaling the coordinates and the metric if necessary as in Remark \ref{remark:scaling}). 
From \eqref{zetadef}, we then deduce
\begin{align}\label{rhozeta}
\rho = z\, \text{sech}(x) (1+ O(z^{-2})), \qquad
\zeta  = z \tanh (x) (1+ O(z^{-2}))  \; .
\end{align}
This gives the leading order coordinate change, and we need to invert it to find $(z,x)$. It is convenient to first define new coordinates $(R,X)$ by
\begin{align}\label{RX}
R := \sqrt{\rho^2+\zeta^2}, \qquad
X := {\rm arcsinh}(\zeta/\rho).
\end{align}
From \eqref{rhozeta} we see that $R=z+O(z^{-1})$, so the asymptotic end corresponds to $R\to \infty$, and inverting implies
\begin{align}\label{falloffz}
 z = R + O(R^{-1})
\end{align}
as $R\to \infty$.
To find $x$, we deduce from \eqref{rhozeta} that  if  $\rho\neq0$ we have $\sinh(x)=(\zeta  / \rho)(1+O(R^{-2}))$. Now, by assumption $1/\rho\leq 1/\rho_0$, so $|\zeta|/\rho \leq R/\rho_0$  and hence we can write  $\sinh(x)=\zeta/\rho+O(R^{-1})$ for this region away from the axis. Then, from the definition of $X$ in \eqref{RX}, we deduce
\begin{align}\label{falloffx}
 x = X + O(R^{-1}) .
\end{align}
Now, recall the function $V$ is defined by \eqref{def:V}. Changing this to the new coordinates \eqref{RX}, and using \eqref{falloffz}, \eqref{falloffx}, we  find
\begin{align*}
\partial_{R}V ={}& 2(1 - X\tanh(X)) + O(R^{-1}), \\
\partial_{X}V ={}& -2R(X\,{\rm sech}^{2}(X)+\tanh(X))+O(1) ,
\end{align*}
which implies
\begin{align*}
 V = 2R - 2RX\tanh(X) + O(\log R).
\end{align*}
Using \eqref{RX} we can write this in the original coordinate and obtain  \eqref{def:V0}-\eqref{Vinfinity}.
\end{proof}

\begin{remark}
The method of proof of the above proposition can also be used for toric Hermitian ALF instantons using Remark \ref{rem:ALF} and taking into account the changes in fall-offs.  This gives a more direct derivation of the asymptotics for $V$ than that presented in~\cite{Biquard:2021gwj} (which considered the problem in the conformally related toric K\"ahler geometry).
\end{remark}

\begin{example}[Eguchi-Hanson]\label{ex:EH}
With $V_0$ as in \eqref{def:V0}, consider the toric Hermitian structure determined via Proposition \ref{prop:Todmetric} by the function 
\begin{align}\label{VEH}
 V(\rho,z) = \tfrac{1}{2}V_{0}(\rho,\zeta-z_1) + \tfrac{1}{2}V_{0}(\rho,\zeta-z_2).
\end{align}
The harmonic conjugate to \eqref{VEH} is (up to an additive constant) $H = \sum_{i=1}^{2} \frac{1}{2} H_{0}(\rho,\zeta-z_{i})$, where 
\begin{align}
 H_{0}(\rho,\zeta) = \zeta R + \frac{\rho^2}{2}\log\left(\frac{R+\zeta}{R-\zeta} \right).
\end{align}
As noted in \cite{Biquard:2023rbr}, this corresponds to the Eguchi-Hanson instanton written in Tod form. To see this explicitly, introduce coordinates $(r,\theta,\phi)$ and a constant $a$ by 
\begin{align*}
 \rho^2 = \frac{(r^4-a^4)\sin^2\theta}{16}, \qquad \zeta = \frac{r^2\cos\theta}{4}, 
 \qquad y=\phi, \qquad z_1 = \frac{a^2}{4}=- z_2, \qquad c = -\frac{a^4}{16}.
\end{align*}
Replacing in \eqref{WV}, \eqref{nuV}, \eqref{FV}, the metric \eqref{eq:gTod} becomes
\begin{align*}
 g = f(r)\frac{r^2}{4}(\td\tau+\cos\theta\td\phi)^2 + \frac{\td r^2}{f(r)} + \frac{r^2}{4}(\td\theta^2+\sin^2\theta\td\phi^2), \qquad f(r)=1-(a/r)^4.
\end{align*}
\end{example}

\section{Global analysis of Hermitian toric instantons}

A toric gravitational instanton is a four-dimensional complete Riemannian manifold $(M,g)$ that is Ricci-flat and admits an (effective) isometric torus action, see e.g.~\cite{Kunduri:2021xiv}. In addition, we will assume that $M$ is simply connected and that the torus action has at least one fixed point and no points with discrete isotropy. It has been shown that under such assumptions the orbit space $\hat{M}:= M/T$, where $T=S^1\times S^1$ denotes the torus isometry group, is a 2-dimensional simply-connected manifold with boundaries and corners~\cite{Hollands:2007aj, OR}.  The Gram matrix of Killing fields is $G_{ij}:= g(\eta_i, \eta_j)$, where $\eta_i$, $i=1,2$ are the Killing vector fields generating the torus symmetry.  The interior of $\hat M$ corresponds to points in $M$ where $G$ has rank-2, the boundaries of $\hat M$ to points where $G$ is rank-1, and the corners of $\hat M$ to where $G$ is rank-0.  

The axis  $A$ is the set of points in $M$ where $G$ does not have full-rank, which corresponds to the boundaries and corners of $\hat M$.  It can be shown that Ricci flatness implies the distribution orthogonal to the span of the Killing fields $\eta_{i}$ is integrable, so one can introduce coordinates on $M \backslash A$ so that the metric takes block form
\begin{equation}
g = G_{ij} \td \phi^i \td \phi^j +q  \; ,
\end{equation}
where $\eta_i=\partial_{\phi^i}$, and  $q$ is the metric on the  orthogonal 2-surfaces which can be identified with the interior of $\hat M$.   We can define a function $\rho:= \sqrt{ \det G}\geq 0$ on $M$ which descends to a function on the orbit space $\hat M$, where $\rho>0$ corresponds to the interior of $\hat M$ and $\rho=0$ to the boundary and corners.  It is well-known that Ricci-flatness of $g$ implies that $\rho$ is a harmonic function on $(\hat M, q)$.  Therefore, one can introduce its harmonic conjugate $\zeta$ by $\td \zeta = -\star_q \td \rho$. Hence, away from any critical points of $\rho$ we can use $(\rho, \zeta)$ as a local coordinate system on the interior of $\hat M$, so  $q= e^{2\nu} (\td \rho^2+ \td \zeta^2)$ for some function $\nu$.   The system of coordinates $(\rho, \zeta, \phi^i)$ is a Riemannian analogue of the Weyl-Papapetrou coordinates that are well known in the context of stationary-axisymmetric spacetimes.  We now establish that $(\rho, \zeta)$ in fact give a global coordinate system on the interior of the orbit space.

\begin{prop}\label{prop:WP}
Let $(M, g)$ be a Ricci-flat toric ALE instanton. Then, the Weyl-Papapetrou coordinates $(\rho, \zeta)$ are a global chart on the interior of the orbit space $\hat{M}$.
\end{prop}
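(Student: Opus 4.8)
The plan is to show that $\Phi:=\rho+i\zeta$ extends to a biholomorphism from the interior of $\hat M$ onto the open right half-plane $\mathcal H:=\{w\in\mathbb C:\mathrm{Re}\,w>0\}$; this in particular establishes that $(\rho,\zeta)$ is a global chart (and identifies the image). To begin, since $\hat M$ is simply connected so is $\mathrm{int}\,\hat M$, and equipping it with the conformal class of $q$ together with a choice of orientation makes it an open simply connected Riemann surface; the function $\rho=\sqrt{\det G}$ is harmonic and non-constant (it vanishes on $\partial\hat M$, which is non-empty since the action has fixed points, and is positive on the interior), so its globally single-valued harmonic conjugate $\zeta$, fixed by $\td\zeta=-\star_q\td\rho$, gives a non-constant holomorphic function $\Phi=\rho+i\zeta$ with image in $\mathcal H$. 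By uniformization $\mathrm{int}\,\hat M$ is biholomorphic to $\mathbb C$ or to the unit disc, and since there is no non-constant holomorphic map from $\mathbb C$ into $\mathcal H$ (which is biholomorphic to the disc) by Liouville's theorem, it is biholomorphic to the disc.

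The core of the argument is the boundary behaviour of $\Phi$. Along a rod---a smooth component of $\partial\hat M$, where exactly one linear combination of the $\eta_i$ vanishes---the standard local form of a toric Ricci-flat metric near a degeneration (cf.\ \cite{Hollands:2007aj,OR}) shows that $q$ and $\rho$ extend smoothly, with $\rho=0$ and $\td\rho\neq0$ on the rod; the Hopf lemma gives $\partial_n\rho>0$ for the inward normal $n$, and the Cauchy--Riemann relations for $\Phi$ then force $\zeta$ to be strictly monotone along the rod, with the same sense on every rod once the orientation is fixed. At a corner, where both $\eta_i$ degenerate, $\rho$ and $\zeta$ extend continuously, so $\zeta$ is strictly monotone along the entire axis, which is connected and homeomorphic to $\mathbb R$. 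Finally, Proposition~\ref{prop:Vinfty} together with the coordinate asymptotics \eqref{rhozeta} and \eqref{falloffx} implies that the ALE end corresponds to $R=\sqrt{\rho^2+\zeta^2}\to\infty$, that the two semi-infinite rods are $x\to\pm\infty$ with $\zeta\to\pm\infty$ there, and that $|\Phi|\to\infty$ uniformly along the end; hence $\hat M$ one-point-compactifies to a closed disc $\overline{\hat M}$ on which $\Phi$ extends continuously with $\Phi\big(\partial\overline{\hat M}\big)=\partial\mathcal H\cup\{\infty\}$, and the boundary map has degree $\pm1$ by the monotonicity of $\zeta$.

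With this in hand the conclusion follows from the argument principle: for any $w_0\in\mathcal H$ the number of solutions of $\Phi=w_0$ in $\mathrm{int}\,\hat M$, counted with multiplicity, equals the winding number of the boundary map about $w_0$, namely $1$. Since $\Phi$ is holomorphic all local multiplicities are positive, so there is exactly one solution and its multiplicity is one; thus $\Phi$ has no critical points in $\mathrm{int}\,\hat M$ and is injective there, i.e.\ a biholomorphism onto $\mathcal H$, and in particular $(\rho,\zeta)$ is a global chart.

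I expect the main obstacle to be the boundary and asymptotic bookkeeping rather than the complex analysis: one must carefully match the abstract orbit-space structure---rods, corners, and the single ALE end---to a continuous extension of $\Phi$ across $\partial\overline{\hat M}$ realising the claimed boundary degree, in particular checking that $\zeta$ does not reverse direction at a corner and that $|\Phi|\to\infty$ uniformly at infinity. It is precisely here that the ALE hypothesis enters, through Proposition~\ref{prop:Vinfty} and the fact (Remark~\ref{remark:Gamma}) that $\Gamma$ is cyclic so that the end is modelled on a lens space.
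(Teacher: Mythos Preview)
Your argument-principle approach is a legitimate alternative to the paper's, which takes a rather different route: the paper shows that for large enough $\rho_0$ the level set $\rho=\rho_0$ lies in the asymptotic strip, so both $\{\rho>\rho_0\}$ and (by simple connectivity of $\hat M$) its complement $\{0<\rho<\rho_0\}$ are simply connected; uniformization then maps $\{0<\rho<\rho_0\}$ conformally to a strip $\{0<\mathrm{Im}\,w<\rho_0\}$ with the two boundary components matched, and the maximum principle forces $\rho=\mathrm{Im}\,w$, so $w=\zeta+i\rho$ is a global holomorphic coordinate. This sidesteps exactly the corner and semi-infinite-rod bookkeeping you flag as the main obstacle.

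There is, however, a genuine gap in your version. You invoke Proposition~\ref{prop:Vinfty} and the asymptotics \eqref{rhozeta}, \eqref{falloffx} to obtain $|\Phi|\to\infty$ along the end, but Proposition~\ref{prop:Vinfty} explicitly assumes a Hermitian structure with $\mathcal W^+\neq0$, whereas Proposition~\ref{prop:WP} is stated for arbitrary Ricci-flat toric ALE instantons; so your proof does not establish the proposition as stated. More seriously, even restricted to the Hermitian case the argument is circular: equations \eqref{rhozeta} and \eqref{falloffx} are derived in the Tod coordinates of Proposition~\ref{prop:Todmetric}, and the identification of those with global Weyl--Papapetrou coordinates is made in the paper only \emph{after} Proposition~\ref{prop:WP}, using its conclusion. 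The fix is to argue directly from the ALE hypothesis, as the paper does: from $G_{ij}=r^2\gamma_{ij}+O(r^{2-\tau})$ one gets $\rho=r^2\sqrt{\det\gamma_{ij}}\,(1+O(r^{-\tau}))$, and from this (together with $\td\zeta=-\star_q\td\rho$ in the asymptotic model) one can read off the properness of $\Phi$ and the divergence of $\zeta$ along the semi-infinite rods without any Hermitian input.
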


\begin{proof}
We can use essentially the same argument that Weinstein used in the Lorentzian setting of stationary and axisymmetric spacetimes~\cite{Weinstein}. The definition of toric ALE implies that the orbit space $(\hat M, q)$ has an asymptotic end $\mathbb{R}\times (S/T)$, where the orbit space $S/T$ of $S$ under $T$ must be homeomorphic to a closed interval and the matrix of toric Killing fields relative to $(S, \gamma)$ defined by $\gamma_{ij}:= \gamma(\eta_i, \eta_j)$ has rank-1 at the endpoints. Thus the asymptotic end of the orbit space is a strip, which in particular is simply connected.  Furthermore, from the Gram matrix and the fall-off of the metric, we deduce $\rho = r^2 \sqrt{\det \gamma_{ij} }(1  + O(r^{-\tau}))$.  Therefore, the curve $\rho=\rho_0$ for large enough $\rho_0$ must lie in the asymptotic end. In particular, this implies that for large enough  constant $\rho_0>0$  the region $\rho> \rho_0$ is simply connected.  Then, since the orbit space $\hat{M}$ is simply connected, the complement region $0 <\rho< \rho_0$ is also simply connected.  Therefore, by the uniformization theorem we can conformally map the region $0< \rho< \rho_0$ to a strip $0< \text{Im}\, w <\rho_0$ in the complex $w$-plane such that $\rho=0$ corresponds to $\text{Im}\, w=0$ and $\rho=\rho_0$ corresponds to $\text{Im}\, w= \rho_0$. Now since $\rho$ is harmonic on $(\hat{M}, q)$ it is also harmonic on the conformally related strip in the $w$-plane and by the maximum principle we must have $\rho= \text{Im}\, w$. This shows that $w=\zeta + i \rho$ is a global holomorphic coordinate on the orbit space.
\end{proof}

\begin{remark}
The same argument shows that the Weyl-Papapetrou coordinates are also global for toric ALF instantons. The only change is that $\rho = O(r)$, where $r$ is defined in Remark \ref{rem:ALF}. This generalises the argument used for toric Hermitian ALF instantons~\cite{Biquard:2021gwj}.
\end{remark}

The above proposition is significant as it means that the interior of the orbit space $\hat M$ is diffeomorphic to the half-plane $\mathbb{H}= \{  (\rho, \zeta) \: | \; \rho>0, \zeta \in \mathbb{R} \}$ and the boundary $\rho=0$ divides into intervals $(z_i, z_{i+1})$, called `rods', separated by points $z_i$, $i=1, \dots, n$, $z_1<\dots< z_n$ corresponding to the corners of $\hat M$.  It is useful to also denote $z_0=-\infty$ and $z_{n+1}= \infty$. The points $\zeta=z_i$ for $i=1, \dots, n$ are fixed points of the torus symmetry and on each rod $I_i := (z_{i}, z_{i+1})$ for $i=0, \dots, n$, the Gram matrix $G$ has rank-1 with kernel spanned by a vector $v_i$ called the rod vector.  The rod structure of a toric instanton is defined as the collection of this boundary data $\{ (I_i, v_i)_{i=0, \dots, n} \}$~\cite{Kunduri:2021xiv}.  In general the metric $g$ possesses a conical singularity at each rod $I_i$ where $v_i$ vanishes, which is absent iff $v_i$ has closed orbits of period $2\pi$ and
\begin{equation}
\frac{| \td | v_i|^2|^2}{4 | v_i |^2}  \to 1  \label{eq:noconical}
\end{equation}
as $\rho \to 0$ for $\zeta \in I_i$; furthermore, smoothness of $g$ requires that in Weyl-Papapetrou coordinates the metric components are smooth functions of $\rho^2$. This will be  important for our global analysis.

We now specialise to the case of toric Hermitian instantons as defined earlier.  The Tod metric in Proposition \ref{prop:Todmetric} is in fact already in Weyl-Papapetrou coordinates, so in particular we deduce that the coordinates $(\rho, \zeta)$ in  Proposition \ref{prop:Todmetric} are Weyl-Papapetrou coordinates (hence the notation) and hence by Proposition \ref{prop:WP} the Tod metric is valid globally. Therefore, the classification of toric Hermitian ALE instantons reduces to a global analysis of the Tod metric, which depends solely on the harmonic function $V(\rho, \zeta)$ defined on the half-plane. In order to identify the correct boundary conditions for $V$, we must examine invariants of the toric symmetry. In particular,  in the basis of Killing fields $(\partial_\tau, \partial_y)$ the Gram matrix of \eqref{eq:gTod} is
\begin{equation}
G = \left( \begin{array}{cc} W^{-1} & W^{-1} F \\ W^{-1} F & W \rho^2 + F^2 W^{-1} \end{array} \right) \; ,
\end{equation}
where $W$ and $F$ are determined in terms of the harmonic function $V$ via \eqref{WV} and \eqref{FV}.  The Gram matrix is an invariant of the instanton and hence as explained above must be a smooth function of $\rho^2$, so $W^{-1} = w_i(\zeta)+ O(\rho^2)$ near each rod $I_i$ for a smooth function $w_i$ on $I_i$  and similarly for the other components.  However, due to the complicated relationship between the Gram matrix and the harmonic function $V$, this does not seem to allow one to straightforwardly deduce the behaviour of $V$ near the rods. Fortunately, there is another invariant that allows one to do this.  

Recall that the Hermitian toric structure implies there is a conformal K\"ahler structure and that the conformal factor $\Omega = z^{-1}$, where $z$ is a coordinate in the Toda form of the metric \eqref{Todametric}, is related to the harmonic  function $V$ by \eqref{def:V}. Therefore, since $z$ is a globally defined positive function on $(M,g)$,  in Weyl-Papapetrou coordinates it must be a smooth function of $\rho^2$, which allows us to deduce the behaviour of $V(\rho, \zeta)$ near the boundary.

\begin{prop}\label{prop:Vaxis}
Let $(M, g)$ be a toric Hermitian instanton ($\mathcal{W}^+\neq 0$) with a given rod structure.  Then, as $\rho\to 0$, the function $V$  in \eqref{eq:functionsTod} is of the form
\begin{equation}
V = f(\zeta) \log\rho^2+ g(\zeta)+ O(\rho^2)    \; ,
\end{equation}
where $f(\zeta)$ and $g(\zeta)$ are respectively a piecewise linear positive function and piecewise smooth function,  with breaks at $\zeta=z_i$, $i=1, \dots n$, and the correction terms are smooth functions of $(\rho^2, \zeta)$.
\end{prop}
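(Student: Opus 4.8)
The plan is to deduce the claimed expansion of $V$ from the smoothness of the globally-defined positive function $z=\Omega^{-1}$ on $(M,g)$, the relation $z=\tfrac{1}{2}\rho V_\rho$ of \eqref{def:V}, and the harmonicity of $V$. First, as explained in the paragraph preceding the statement, $z=\Omega^{-1}$ is a smooth, strictly positive, torus-invariant function on the smooth manifold $M$, so in Weyl--Papapetrou coordinates, near the interior of any rod $I_i$, it must be a smooth function of $(\rho^2,\zeta)$; write $z=z_0(\zeta)+\rho^2\,\tilde z(\rho^2,\zeta)$ with $z_0,\tilde z$ smooth on $I_i$ and $z_0>0$. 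Since $z$ is continuous (indeed smooth) across the corners $\zeta=z_i$, which are points of $M$, the functions $z_0$ defined on the individual rods fit together into a continuous positive function on the whole axis $\rho=0$.

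Next I would integrate. From \eqref{def:V}, $\rho V_\rho=2z=2z_0(\zeta)+2\rho^2\tilde z(\rho^2,\zeta)$, so $V_\rho=\tfrac{2z_0(\zeta)}{\rho}+2\rho\,\tilde z(\rho^2,\zeta)$. Integrating this in $\rho$ over a fixed rod, and noting that an antiderivative of $\rho\mapsto\rho\,\tilde z(\rho^2,\zeta)$ is, up to a $\zeta$-dependent constant, of the form $\rho^2\times(\text{smooth in }(\rho^2,\zeta))$ by Hadamard's lemma in the variable $\rho^2$, gives
\[
V=z_0(\zeta)\log\rho^2+g(\zeta)+O(\rho^2),
\]
where the remainder is $\rho^2$ times a smooth function of $(\rho^2,\zeta)$. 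The integration ``constant'' $g$ depends on $\zeta$ alone; since $V$ is smooth on the interior of the half-plane and $z_0$ is smooth on each rod, evaluating at a fixed $\rho_\ast>0$ shows $g$ is smooth on each rod. Setting $f(\zeta):=z_0(\zeta)$ one already obtains the stated form, with $f$ continuous and positive and $g$ piecewise smooth, both with breaks only at the $z_i$.

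It remains to show $f$ is piecewise linear, and here I would use that $V$ solves the axisymmetric Laplace equation $\tfrac{1}{\rho}(\rho V_\rho)_\rho+V_{\zeta\zeta}=0$. From the previous step $\tfrac{1}{\rho}(\rho V_\rho)_\rho$ is a smooth function of $(\rho^2,\zeta)$, in particular bounded as $\rho\to0$, whereas $V_{\zeta\zeta}=f''(\zeta)\log\rho^2+g''(\zeta)+O(\rho^2)$. Since $\log\rho^2\to-\infty$, matching the coefficient of this divergent term forces $f''\equiv0$ on the interior of each rod, so $f$ is linear there, hence piecewise linear as claimed; the subleading relation then only fixes $g''$ and is automatically consistent. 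The only genuinely nontrivial input is the passage from the invariant-smoothness of $z$ on $M$ to the expansion of the auxiliary harmonic function $V$ via $z=\tfrac{1}{2}\rho V_\rho$; once that is in place the argument is an elementary integration together with harmonicity, and the only points needing care are tracking that the remainders are bona fide smooth functions of $(\rho^2,\zeta)$ and that $f=z_0$ remains continuous and positive across the corners --- both following from smoothness and positivity of $\Omega$ on all of $M$, the axis included.
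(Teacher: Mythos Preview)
Your proof is correct and follows essentially the same approach as the paper: both start from smoothness of $z=\Omega^{-1}$ in $(\rho^2,\zeta)$, integrate $\rho V_\rho=2z$ to obtain the $f(\zeta)\log\rho^2+g(\zeta)+O(\rho^2)$ expansion, and then force $f''=0$ on each rod. The only cosmetic difference is that the paper phrases the last step via the integrability condition $H_{\zeta\rho}=H_{\rho\zeta}$ for the harmonic conjugate $H$, whereas you invoke the axisymmetric Laplace equation $\tfrac{1}{\rho}(\rho V_\rho)_\rho+V_{\zeta\zeta}=0$ directly; since $H_\rho=-\rho V_\zeta$ and $H_\zeta=\rho V_\rho$, these are the same equation.
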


\begin{proof}
Smoothness of the function $z$  implies that if $\rho\to 0$ and $\zeta \in I_i$ we have
\[
z= f_i(\zeta) + O(\rho^2)  \; ,
\]
where $f_i(\zeta)$ is a smooth positive function on $I_i$ and the correction term is a smooth function of $\rho^2, \zeta$. Therefore, we can integrate \eqref{def:V} to obtain
\[
V = f_i(\zeta) \log\rho^2+ g_i(\zeta)+ O(\rho^2)
\]
for some smooth function $g_i$ near each $I_i$. Thus the harmonic conjugate \eqref{eq:Hdef} satisfies
\begin{equation}
H_\zeta = 2 f_i(\zeta)+ O(\rho^2), \qquad H_\rho= - f_i'(\zeta) \rho \log \rho^2 - \rho g'_i(\zeta) +O(\rho^3)  \; .  \label{eq:dHaxis}
\end{equation}
The integrability condition $H_{\zeta\rho}= H_{\rho\zeta}$ therefore implies   $f_i''(\zeta) \rho \log\rho^2= O(\rho)$ and hence $f_i''(\zeta)=0$, 
which establishes the claim.
\end{proof}

\begin{remark}
The above proposition was also derived for toric Hermitian ALF instantons~\cite{Biquard:2021gwj}. In particular, they used symplectic coordinates on the conformally related toric K\"ahler manifold $(M, \hat{g})$ and the known near axis boundary condition for the symplectic potential.  In fact, our proof above is also valid in the ALF case and gives an alternate, simpler, derivation of the near axis behaviour of $V$ directly in Weyl-Papapetrou coordinates, which avoids the use of methods of  toric K\"ahler geometry.
\end{remark}

Therefore, by Proposition \ref{prop:Vaxis} and Proposition \ref{prop:Vinfty}  we have identified the boundary conditions for $V$ near the axis and  near infinity.  Since $V$ is an axisymmetric harmonic function that is smooth at least away from the axis this is sufficient to fully fix its functional form.  The proof of this is essentially identical to the proof for the ALF case~\cite{Biquard:2021gwj}, albeit with minor changes to account for the ALE asymptotics, which we will repeat for completeness.

\begin{prop}\label{prop:fVsol}
Let $(M, g)$ be a toric Hermitian ALE instanton ($\mathcal{W}^{+}\neq0$). Then
\begin{equation}
f(\zeta) =  \sum_{i=1}^n a_i | \zeta-z_i|   \label{eq:f}
\end{equation}
and
\begin{equation}
V =\sum_{i=1}^n a_i V_0(\rho, \zeta-z_i)   \label{eq:Vsol}
\end{equation}
where $a_i$ are positive constants such that $\sum_{i=1}^n a_i=1$ and $V_0$ is defined in \eqref{def:V0}.
\end{prop}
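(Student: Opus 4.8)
The plan is to combine the near-axis expansion from Proposition \ref{prop:Vaxis}, the asymptotic expansion from Proposition \ref{prop:Vinfty}, and the fact that $V$ is an axisymmetric harmonic function on the half-plane $\mathbb{H}$ that is smooth away from the axis (by Proposition \ref{prop:WP}, these coordinates are global, so $V$ is genuinely defined on all of $\mathbb{H}$). First I would record the elementary facts about the building block $V_0$ from \eqref{def:V0}: it is axisymmetric harmonic on $\mathbb{R}^3$ minus the segment where its source sits, near the axis one has $V_0(\rho,\zeta) = |\zeta|\log\rho^2 + (\text{smooth in }\rho^2) + O(\rho^2)$ up to the relevant constants, i.e.\ $f$-contribution $|\zeta-z_i|$ from a source at $z_i$, and as $R\to\infty$ one has $V_0 = 2R - \zeta\log\frac{R+\zeta}{R-\zeta} + O(R^{-1})$, matching the leading asymptotics in \eqref{Vinfinity} for a single source with unit weight; summing $n$ sources with weights $a_i$ gives leading asymptotics $\big(\sum a_i\big)V_0 + O(\log R)$, so comparison with Proposition \ref{prop:Vinfty} will force $\sum_i a_i = 1$.

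The core of the argument is to show that the piecewise-linear function $f(\zeta)$ from Proposition \ref{prop:Vaxis} must in fact have the form \eqref{eq:f}. By Proposition \ref{prop:Vaxis}, on each rod $I_i$ the function $f$ is linear, with possible slope jumps only at the $z_i$; write $f = p_i\zeta + q_i$ on $I_i$. Continuity of $V$ (hence of $f$) across the $z_i$ must be checked or imposed, and on the two semi-infinite rods $I_0=(-\infty,z_1)$ and $I_n=(z_n,\infty)$ the asymptotics \eqref{Vinfinity} — which near the axis behaves like $\zeta\log\rho^2 + \dots$ in the appropriate sector — fixes the slopes to be $\mp 1$ there (equivalently $f\sim|\zeta|$ at $\pm\infty$). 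The key step is then to argue that the difference $\tilde V := V - \sum_i a_i V_0(\rho,\zeta-z_i)$, for the choice $a_i$ equal to half the jump in the slope of $f$ at $z_i$, is an axisymmetric harmonic function on all of $\mathbb{H}$ with no $\log\rho^2$ term anywhere on the axis (so it extends smoothly across the whole axis to an axisymmetric harmonic function on $\mathbb{R}^3$) and which grows at most like $O(\log R)$ at infinity. A Liouville-type argument for axisymmetric harmonic functions on $\mathbb{R}^3$ with sublinear growth then forces $\tilde V$ to be constant, and the constant can be absorbed (it contributes an unphysical additive constant; alternatively the $O(\log R)$ condition with no monopole term rules out even the constant once one is careful, or it is fixed by an appropriate normalization). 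This yields \eqref{eq:Vsol}, and reading off the near-axis behaviour of the right-hand side gives \eqref{eq:f}; positivity of $f$ then forces each $a_i>0$, and the infinity normalization gives $\sum_i a_i = 1$.

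For the Liouville step I would proceed as follows: an axisymmetric harmonic function on $\mathbb{R}^3$ is the restriction of a genuine harmonic function $u(x_1,x_2,x_3) = \tilde V(\rho,\zeta)$ with $\rho^2 = x_1^2+x_2^2$, $\zeta = x_3$; if $u = O(\log R) = o(R)$ at infinity, then in fact $u = o(R^\epsilon)$ for every $\epsilon$ after a moment's thought, but more simply, expanding $u$ in spherical harmonics $u = \sum_{\ell\ge 0} (c_\ell R^\ell + d_\ell R^{-\ell-1})P_\ell(\cos\theta)$ (only $m=0$ terms by axisymmetry), the $R^\ell$ terms with $\ell\ge 1$ are incompatible with $O(\log R)$ growth and the $R^{-\ell-1}$ terms with $\ell\ge 0$ are singular at the origin unless they vanish — and regularity of $\tilde V$ on the $\zeta$-axis at the relevant points, together with absence of a monopole $d_0/R$ term (which would violate the $O(\log R)$ bound is not quite it — rather $d_0/R \to 0$ is fine, but such a term corresponds to a point source at the origin and is excluded because $\tilde V$ is harmonic there), forces all coefficients except $c_0$ to vanish. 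Hence $\tilde V$ is constant.

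\textbf{Main obstacle.} The delicate point, and the step I expect to require the most care, is establishing that $\tilde V$ really extends to a \emph{globally} harmonic, regular (no point sources, no line sources) axisymmetric function on $\mathbb{R}^3$: one must verify that subtracting $\sum a_i V_0(\rho,\zeta-z_i)$ with $a_i$ chosen as half the slope-jumps exactly cancels \emph{every} $\log\rho^2$ term on \emph{every} rod (including the semi-infinite ones, which is where the asymptotic normalization enters and where one must be careful that $V_0$ sourced at $z_i$ contributes slope $+1$ to the right of $z_i$ and $-1$ to the left), and that the resulting function has no residual singularity at the corner points $z_i$ themselves. This is a bookkeeping argument about matching piecewise-linear data to a superposition of the kink profiles $|\zeta - z_i|$, combined with the elliptic regularity statement that an axisymmetric harmonic function on $\mathbb{H}$ whose near-axis expansion has vanishing $\log\rho^2$ coefficient extends smoothly across the axis — this last fact is standard (it is the statement that a harmonic function bounded near a codimension-2 set extends across it), but it needs to be invoked cleanly. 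Once $\tilde V$ is globally harmonic and $o(R)$, the Liouville conclusion is immediate.
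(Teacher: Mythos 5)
Your overall strategy (combine the near-axis data of Proposition \ref{prop:Vaxis} with the asymptotics of Proposition \ref{prop:Vinfty}, then pin down $V$ by a harmonic-uniqueness/Liouville argument) is the same as the paper's, and your uniqueness step is essentially identical to the one given there. However, there are two genuine gaps. First, your claim that ``positivity of $f$ then forces each $a_i>0$'' is false: a positive, continuous, piecewise-linear function with end slopes $\mp 1$ need not be convex, e.g. $f(\zeta)=|\zeta+2|+|\zeta-2|-|\zeta|$ is positive but has a negative slope jump at $\zeta=0$. The positivity of the $a_i$ (equivalently convexity of $f$) is a nontrivial input that the paper obtains from the geometry: near the axis $W\geq 0$ together with $c<0$ (Proposition \ref{prop:c}) and $V_\rho>0$ forces $V_{\zeta\zeta}<0$ there, the explicit asymptotics give $V_{\zeta\zeta}<0$ near infinity, and since $V_{\zeta\zeta}$ is harmonic the maximum principle yields $V_{\zeta\zeta}<0$ everywhere, hence $f$ convex and $a_i>0$. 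Nothing in your proposal plays this role.

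Second, the boundary data you use do not exclude an additive constant in $f$: Propositions \ref{prop:Vaxis} and \ref{prop:Vinfty} are equally consistent with $f(\zeta)=A+\sum_i a_i|\zeta-z_i|$ and $V=A\log\rho^2+\sum_i a_i V_0(\rho,\zeta-z_i)$, since the constant $A$ changes no slopes and the term $A\log\rho^2$ is $O(\log R)$ in the region $\rho\geq\rho_0$ where Proposition \ref{prop:Vinfty} applies. With your choice of $a_i$ as half the slope jumps, the difference $\tilde V=V-\sum_i a_iV_0$ then carries a residual $A\log\rho^2$ along the entire axis, so it does not extend harmonically across the axis and your Liouville argument cannot get started; indeed $A>0$ is exactly what occurs for the ALF instantons, so $A=0$ cannot follow from the ingredients you invoke. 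The paper removes this term by a further ALE-specific argument: evaluating $W$ on the semi-infinite rods gives $W|_{\rho=0}=-c^{-1}A+O(|\zeta|^{-1})$, while the ALE fall-off $W=k^2/(4r^2)+O(r^{-4})$ (Proposition \ref{prop:falloffOmega}) forces $W\to 0$ at infinity, hence $A=0$. Your proof needs both of these additions to be complete.
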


\begin{proof}
By Proposition \ref{prop:Vaxis}, near each rod we have
\[
\rho V_\rho =2 f(\zeta) + O(\rho^2), \qquad  V_\zeta = f'(\zeta) \log \rho^2 + O(1)
\]
and 
\[
V_{\zeta \rho}  = \frac{2 f'(\zeta)}{\rho}+ O(\rho), \qquad V_{\zeta\zeta} = O(1)  \; .
\]
In particular, $V_\rho>0$ as $\rho\to 0$. Hence, since the constant $c<0$ by Proposition \ref{prop:c}, then $W= | \partial_\tau |^2\geq 0$ together with its explicit form \eqref{WV} implies that $V_{\zeta\zeta}<0$ as $\rho \to 0$. On the other hand, from Proposition \ref{prop:Vinfty} and the identity,
\[
\partial_{\zeta\zeta}V_0 =- \frac{2}{\sqrt{\rho^2+\zeta^2}}  \; ,
\]
we also see that $V_\rho>0$ and $V_{\zeta\zeta}<0$ as $\rho^2+\zeta^2\to \infty$ .   Hence, since $V_{\zeta\zeta}$ is harmonic, by the maximum principle it follows that $V_{\zeta\zeta}<0$ everywhere.  Similarly, $V_\rho>0$ everywhere by applying the maximum principle to the harmonic-like equation satisfied by $V_\rho$.  

In particular, $V_{\zeta\zeta}<0$ implies that $V$ is a concave function of $\zeta$ for all $\rho>0$ and hence taking the limit $\rho\to 0$ it follows from Proposition \ref{prop:Vaxis} that $f(\zeta)$ is a convex linear function. Thus we can explicitly write 
\begin{equation}
f(\zeta) =A+   \sum_{i=1}^n a_i | \zeta-z_i|   \label{eq:fA} \; ,
\end{equation}
where $A$  and $a_i:= \tfrac{1}{2} (f'_i-f'_{i-1})>0$ are constants and the constant slopes on each rod $f_i':= f'(\zeta)|_{(z_i, z_{i+1})}$ are increasing $f'_i<f'_{i+1}$ for all $i=0, \dots, n$. Upon comparing to the asymptotics in Proposition \ref{prop:Vinfty} and using the fact that as $\rho\to 0$,
\[
V_0 = |\zeta | \log \rho^2+  O(1)
\]
 we deduce that the slopes on the semi-infinite rods are $f_0'=-1$ and $f_{n}'=1$ and hence $\sum_{i=0}^{n} a_i=1$. 

Next, note that a harmonic function which exhibits the correct asymptotic behaviour as $\rho\to 0$ and $R\to \infty$ is given by 
\begin{equation}
V = A\log \rho^2 + \sum_{i=1}^n a_i V_0(\rho, \zeta-z_i)  \;. \label{eq:VsolA}
\end{equation}
Now suppose $\tilde{V}$ is another axisymmetric harmonic function with the same asymptotic behaviour as in Proposition \ref{prop:Vinfty} and Proposition \ref{prop:Vaxis} for some $f(\zeta)$. Then $V-\tilde{V}=O(1)$ as $\rho \to 0$ so it extends to a smooth harmonic function on $\mathbb{R}^3$. Furthermore, $V-\tilde{V}=O(\log R)$ which by harmonicity implies that $V- \tilde{V}$ must be a constant, and we may set this constant to zero since $V$ is only defined up to an affine function of $\zeta$.

We now examine the behaviour of the metric near infinity along the semi-infinite rods.  Using the above explicit forms \eqref{eq:Vsol} and \eqref{WV} one can easily check that  as $| \zeta | \to \infty$
\begin{equation}
W|_{\rho=0} = - c^{-1} A + O(|\zeta |^{-1})  \label{Winfty}
\end{equation}
However, ALE asymptotics requires that $W\to 0$ at infinity (cf. Prop. \ref{prop:falloffOmega}) and hence we must have $A=0$.
\end{proof}

\begin{remark}
In the ALF case it was shown that $f$ and $V$ are given by \eqref{eq:fA} and \eqref{eq:VsolA} respectively, where $A>0$~\cite{Biquard:2021gwj}. Thus from \eqref{Winfty} we deduce that $\xi= \partial_\tau$ has bounded norm near infinity.
\end{remark}

\section{Classification of smooth instantons}

We will now complete the classification of smooth toric hermitian instantons. This requires us to examine the rod structure in more detail, which again follows the analysis of the ALF case closely~\cite{Biquard:2021gwj}.
\begin{lemma}\label{lem:rodv}
Let $(M, g)$ be a smooth toric ALE instanton as in Proposition \ref{prop:Vaxis}. Then, the $2\pi$-periodic rod vector $v_i$ associated to the rod $I_i$ is given by
\begin{equation}
v_i = \left\{ \begin{array}{cc}   f_i'  ( \partial_y- F_i \partial_\tau)  & \text{if $f'_i\neq 0$} \\  c^{-1} f_i^2 \partial_{\tau} & \text{if $f_i'=0$} \end{array}  \right.
\end{equation}
where $f_i:= f(z_i)$.
\end{lemma}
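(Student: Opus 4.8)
The plan is to directly compute the kernel of the Gram matrix $G$ on each rod $I_i$, using the asymptotic forms of $V$ and its derivatives near $\rho=0$ established in Proposition \ref{prop:Vaxis} (and used in the proof of Proposition \ref{prop:fVsol}), and then normalize the resulting rod vector so that its orbit has period $2\pi$. Recall that on $I_i$ we have $\rho V_\rho = 2f(\zeta)+O(\rho^2)$, $V_\zeta = f'(\zeta)\log\rho^2 + O(1)$, $V_{\zeta\rho} = 2f'(\zeta)/\rho + O(\rho)$, and $V_{\zeta\zeta}=O(1)$, where $f(\zeta)$ is the piecewise-linear function with slope $f_i'$ on $I_i$ and $f_i:=f(z_i)$. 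Substituting these into \eqref{WV}, \eqref{FV}, one finds the two qualitatively different behaviours: when $f_i'\neq 0$, the $V_{\zeta\rho}^2$ term dominates the denominator $V_{\zeta\zeta}^2+V_{\zeta\rho}^2 \sim 4f_i'^2/\rho^2$, so that $W = \frac{1}{2c}\big(\rho V_\rho + O(\rho^2)\big) = \frac{f_i}{c} + O(\rho^2)$ — which is finite and nonzero — while $F \to -\frac{1}{2c}(\rho^2 V_\zeta + 2H)$ has a finite limit $F_i$ (using \eqref{eq:Hdef}, $H\to$ const along the rod since $H_\rho=-\rho V_\zeta = O(\rho\log\rho)$). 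When instead $f_i'=0$, the denominator is $V_{\zeta\zeta}^2+O(\rho^2)$ with $V_{\zeta\zeta}$ generically nonzero, and then $\rho V_\rho^2 V_{\rho\zeta}/(V_{\zeta\zeta}^2+V_{\zeta\rho}^2) = O(\rho)\to 0$ and $V_\zeta\rho^2\to 0$, so $F$ also has a finite limit, but now $W\rho^2 \to 0$ and one checks $W e^u \sim$ behaves so that the degenerate direction is $\partial_\tau$ rather than a combination involving $\partial_y$.

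Concretely, from the block form $g = G_{ij}\td\phi^i\td\phi^j + q$ with $\phi^1=\tau,\phi^2=y$ and $G$ as displayed above, a vector $v=\alpha\partial_\tau+\beta\partial_y$ lies in $\ker G$ at $\rho=0$ iff $W^{-1}\alpha + W^{-1}F\beta = 0$ and $W^{-1}F\alpha + (W\rho^2+F^2W^{-1})\beta = 0$. The first equation gives $\alpha = -F\beta$; substituting into the second leaves $W\rho^2\beta = 0$, so if $W\rho^2\to 0$ on the rod (the $f_i'=0$ case) then $\beta$ is unconstrained and, combined with regularity, the kernel is spanned by $\partial_\tau$; if $W$ stays finite and nonzero (the $f_i'\neq 0$ case) then $\rho^2\to 0$ forces nothing extra and we need the subleading behaviour — more carefully, $\det G = W\rho^2\cdot W^{-1} = \rho^2\to 0$, and the rank-1 kernel is spanned by $\partial_y - F_i\partial_\tau$ in the limit. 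The remaining task is the $2\pi$-normalization: one must rescale each candidate rod vector by the constant that makes the orbit close with period $2\pi$, equivalently so that condition \eqref{eq:noconical} can be meaningfully stated. In the $f_i'\neq 0$ case, working out $|v|^2$ for $v=\partial_y-F_i\partial_\tau$ near the rod using $G$ gives $|v|^2 = W\rho^2 + O(\rho^4)$, and comparing with $q = e^{2\nu}(\td\rho^2+\td\zeta^2)$ via \eqref{nuV} — where $e^{2\nu} = \frac{W\rho^2}{4}(V_{\zeta\zeta}^2+V_{\zeta\rho}^2) \sim \frac{W\rho^2}{4}\cdot\frac{4f_i'^2}{\rho^2} = W f_i'^2$ — one sees that the canonically normalized generator is $f_i'(\partial_y - F_i\partial_\tau)$. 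In the $f_i'=0$ case, $|v|^2$ for $v=\partial_\tau$ is $W^{-1} = c/f_i + O(\rho^2)$ up to the relevant order, and matching against $e^{2\nu}$ (now $\sim \frac{W\rho^2}{4}V_{\zeta\zeta}^2$ with a different structure) together with $W = W_0/c = \frac{z}{c}(1-\tfrac{zu_z}{2})$ and $z\to f_i$ yields the normalization $c^{-1}f_i^2\,\partial_\tau$; the sign works out because $c<0$ by Proposition \ref{prop:c} and $f_i>0$, so this is a genuine rod vector with the correct reality.

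I expect the main obstacle to be the bookkeeping in the $f_i'=0$ case: there the leading behaviour of several terms vanishes, so extracting the correct normalization requires the subleading expansion of $W$ and $e^{2\nu}$ in powers of $\rho^2$, and in particular controlling $Wu$ (equivalently $We^u$) and its relation to $z=\tfrac12\rho V_\rho \to f_i$ via \eqref{eq:Weq0}. One has to be careful that $f_i = f(z_i)$ is the value of the \emph{piecewise-linear} function $f$ at the corner, which by continuity equals $\lim_{\zeta\to z_i}f_i(\zeta)$ from either adjacent rod, and that this is precisely the limiting value of $z$ along the rod where $f_i'=0$ — consistency here is what ties the two cases together at the corners. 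The $f_i'\neq 0$ case is comparatively routine once the dominant-balance in the denominator $V_{\zeta\zeta}^2+V_{\zeta\rho}^2$ is identified. Throughout, I would lean on the fact — already invoked in the text — that smoothness forces the metric components to be smooth functions of $\rho^2$, which both justifies writing the expansions and guarantees the $O(\rho^2)$ (rather than $O(\rho)$ or $O(\rho\log\rho)$) error terms in the final answer.
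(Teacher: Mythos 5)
Your overall strategy is the same as the paper's (expand $W$, $F$, $e^{2\nu}$ near $\rho=0$ using Proposition \ref{prop:Vaxis}, read off the kernel of the Gram matrix, and fix the normalisation from the no-conical-singularity condition \eqref{eq:noconical}), but several of your key expansions are wrong, and one of them hides the essential point of the lemma. For $f_i'\neq 0$: in \eqref{WV} the second term does \emph{not} decay; with $V_\rho\sim 2f/\rho$, $V_{\zeta\rho}\sim 2f'/\rho$, $V_{\zeta\zeta}=O(1)$ one gets $\frac{V_\rho^2V_{\zeta\zeta}}{V_{\zeta\zeta}^2+V_{\zeta\rho}^2}\to \frac{f^2V_{\zeta\zeta}}{f'^2}$, so $W=c^{-1}\bigl(f(\zeta)+\frac{f(\zeta)^2V_{\zeta\zeta}}{2f'(\zeta)^2}\bigr)+O(\rho^2)$, not $f_i/c+O(\rho^2)$; your value would be negative (since $c<0$, $f>0$), contradicting $W=|\xi|^{-2}>0$, and the positivity of $W$ on the rod is in fact a nontrivial point requiring the explicit form of $f$ and $V_{\zeta\zeta}$ plus a Cauchy--Schwarz-type inequality. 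More seriously, in \eqref{FV} you drop the first term, which tends to $\frac{2f^2}{f'}\neq 0$, and you assert that $H$ is constant along the rod; but \eqref{eq:Hdef} gives $H_\zeta=\rho V_\rho\to 2f(\zeta)\neq 0$, so $H(0,\zeta)$ varies along the rod (only $H_\rho\to0$). The correct statement is $F=-c^{-1}\bigl(H(0,\zeta)-\frac{f(\zeta)^2}{f'(\zeta)}\bigr)+O(\rho^2)$, and the fact that $F(0,\zeta)$ is \emph{constant} on each rod --- which is what allows a single Killing field $\partial_y-F_i\partial_\tau$ with constant coefficients to vanish along the whole of $I_i$ --- follows only from the cancellation $\partial_\zeta F(0,\zeta)=-c^{-1}\bigl(2f-\partial_\zeta(f^2/f')\bigr)=0$ using $f''=0$ on the rod. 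With your expansion, $F(0,\zeta)=-c^{-1}H(0,\zeta)$ would vary along the rod, so the constancy of $F_i$ (and hence the existence of the rod vector itself) is not established. (You also omit the cancellation of the $\rho^2\log\rho$ terms in $F$, needed for smooth $O(\rho^2)$ corrections.)

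In the $f_i'=0$ case your description of the degeneration is backwards: there $W=\frac{2f_i^2}{c\,g''(\zeta)\rho^2}+O(1)$ blows up, $W\rho^2$ tends to the nonzero constant $\frac{2f_i^2}{cg''}$, and it is $W^{-1}=|\partial_\tau|^2$ that goes to zero --- which is precisely why $\partial_\tau$ is the degenerate direction. Your claimed $|\partial_\tau|^2=W^{-1}=c/f_i+O(\rho^2)$ is inconsistent with your own identification of $\partial_\tau$ as the rod vector, and the normalisation $c^{-1}f_i^2$ is then asserted rather than derived; carrying out \eqref{eq:noconical} with the correct expansions ($e^{2\nu}\sim \frac{W\rho^2}{4}g''^2$, $|a\,\partial_\tau|^2=a^2\frac{cg''\rho^2}{2f_i^2}+\dots$) gives $a^2=f_i^4/c^2$, hence $a=c^{-1}f_i^2$ up to sign. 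In short, the route is the right one, and the $f'\neq0$ normalisation happens to be insensitive to your erroneous value of $W$ (it cancels between $|v_i|^2$ and $e^{2\nu}$), but as written the proof contains incorrect leading-order expansions of both $W$ and $F$, fails to justify that $F_i$ is constant on each rod, and does not actually compute the $f_i'=0$ normalisation.
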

\begin{proof}  First we note that by integrating \eqref{eq:dHaxis} we find
\[
H = H(0, \zeta)- f'(\zeta) \rho^2 \log \rho + O(\rho^2), \qquad H(0, \zeta)= 2\int f(\zeta) \td \zeta
\]
as $\rho \to 0$, which together with Proposition \ref{prop:Vaxis} allows us to deduce the near boundary expansion of $W$ and $F$ from \eqref{WV} and \eqref{FV}.

Now assume we are on a rod $\zeta \in I_i$  such that $f_i'= f'(\zeta)\neq 0$. Then for small $\rho$, we find
\[
W = c^{-1} \left( f(\zeta)+ \frac{f(\zeta)^2 V_{\zeta\zeta}}{2f'(\zeta)^2}\right)+ O(\rho^2), \quad e^{2\nu} = W f'(\zeta)^2 + O(\rho^2)
\]
and
\begin{equation}
F= - c^{-1} \left( H(0, \zeta)- \frac{f(\zeta)^2}{f'(\zeta)} \right) + O(\rho^2) \; ,  \label{eq:Faxis}
\end{equation}
where notice the corrections $O(\rho^2)$ are all smooth functions of $\rho^2$. In particular, the $\rho^2\log\rho$ term in $F$ arising from $H$ cancels with that coming from $V_\zeta$. Next, notice that $\partial_\zeta F(0, \zeta)=0$ using $\partial_\zeta H(0, \zeta)=2 f(\zeta)$ and the fact that $f''(\zeta)=0$ on each rod, so this defines a constant on each rod
\begin{equation}
F_i:= F(0, \zeta)|_{I_i}  \; .
\end{equation}
It can be shown that $W|_{I_i}>0$; this follows from the above near axis asymptotics, the explicit form of $f(\zeta)$ and $V_{\zeta\zeta}$ in Proposition \ref{prop:fVsol}, and  the identity $\sum_{i=1}^n a_i | \zeta- z_i | \sum _{j=1}^n a_j/ |\zeta- z_j| \geq 1$. Therefore,  the Killing vector field
\[
v_i= c_i ( \partial_y- F_i \partial_{\tau})
\]
vanishes on $I_i$, where $c_i$ is a normalisation constant we will now fix.   Using the explicit form of the Tod metric \eqref{eq:gTod}, we find $| v_i |^2= c_i^2 W \rho^2 +O(\rho^4)$ and  one can show that the absence of conical singularity condition \eqref{eq:noconical} if satisfied iff  $c_i^2= f_i'^2$.  Thus, without loss of generality we can fix $c_i=f_i'$ and the rod vector is as claimed.

Now assume we are on a rod $\zeta\in I_i$ so that $f_i'=f'(\zeta)=0$. In particular, this implies $f_i=f(z_i)=f(z_{i+1})=f_{i+1}$. Then, expanding for small $\rho$ we find,
\[
W= \frac{2 f_i^2}{c g''(\zeta) \rho^2} +O(1)  \; ,  \qquad F=O(1)
\]
where note that $V_{\zeta\zeta}= g''(\zeta)+O(\rho^2)$ so $V_{\zeta\zeta}<0$ implies $g''(\zeta)<0$.  One then finds that $\partial_\tau$ vanishes at $I_i$ and the conical singularity is absent if the $2\pi$ periodic normalised rod vector is as claimed.
\end{proof}

\begin{lemma}
If $f_i' \neq 0$ and $f_{i-1}' \neq 0$, then 
\begin{equation}
F_i- F_{i-1}= \lim_{z\to z_i^+} F- \lim_{z\to z_i^-} F = c^{-1} f_i^2 \left( \frac{1}{f_i'}- \frac{1}{f_{i-1}'} \right)   \label{eq:Fdiff}
\end{equation}
where $f_i:= f(z_i)$.   

If $f_i'=0$ then
\begin{equation}
F_{i+1}- F_{i-1}= \lim_{z\to z_{i+1}^+} F- \lim_{z\to z_{i}^-} F =- c^{-1} \left(  2 f_i ( z_{i+1}-z_i) - f_i^2 \left( \frac{1}{f'_{i+1}}- \frac{1}{f'_{i-1}} \right) \right)  \label{eq:Fdiff0}
\end{equation}
\end{lemma}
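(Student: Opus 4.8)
The plan is to compute $F$ on the two semi-infinite-rod-adjacent pieces of the axis explicitly from the near-axis expansion established in Proposition~\ref{prop:Vaxis} and the rod data of Lemma~\ref{lem:rodv}, and then take the difference. The key observation is that $F_i = F(0,\zeta)|_{I_i}$ is a genuine constant on each rod (as shown in Lemma~\ref{lem:rodv}), so the jump $F_i - F_{i-1}$ is obtained by evaluating the two rod-constants at the common endpoint $\zeta = z_i$ and subtracting. Thus there is no contour integral or residue to compute; everything reduces to the explicit formulas for $F(0,\zeta)$ already assembled in the proof of Lemma~\ref{lem:rodv}.

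\emph{Case $f_i' \neq 0$, $f_{i-1}' \neq 0$.} In the proof of Lemma~\ref{lem:rodv} it was shown that on a rod $I_j$ with $f_j' \neq 0$ one has
\[
F_j = -c^{-1}\left( H(0,\zeta) - \frac{f(\zeta)^2}{f_j'} \right)\bigg|_{I_j},
\]
where $H(0,\zeta) = 2\int f(\zeta)\,\td\zeta$ is continuous across $z_i$ (it is an integral of the continuous piecewise-linear function $f$), and $f(\zeta)$ itself is continuous at $z_i$ with $f(z_i) = f_i$. Therefore, taking $\zeta \to z_i$ from the rod $I_i$ gives $F_i = -c^{-1}(H(0,z_i) - f_i^2/f_i')$, and from the rod $I_{i-1}$ gives $F_{i-1} = -c^{-1}(H(0,z_i) - f_i^2/f_{i-1}')$; the $H(0,z_i)$ terms cancel in the difference, leaving exactly
\[
F_i - F_{i-1} = c^{-1} f_i^2\left( \frac{1}{f_i'} - \frac{1}{f_{i-1}'} \right),
\]
which is \eqref{eq:Fdiff}.

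\emph{Case $f_i' = 0$.} Here the intermediate rod $I_i = (z_i, z_{i+1})$ is a fixed-point set of $\partial_\tau$ (the second case of Lemma~\ref{lem:rodv}), with $f_i = f(z_i) = f(z_{i+1}) = f_{i+1}$ since $f$ is constant on $I_i$. On $I_i$ the relation $\partial_\zeta F(0,\zeta) = 0$ may fail to hold in the naive form, so instead one integrates $\partial_\zeta F(0,\zeta)$ across $I_i$ using $F_\zeta|_{\rho=0}$ computed from \eqref{FV}, \eqref{eq:Hdef} and the expansion $V = f_i\log\rho^2 + g(\zeta) + O(\rho^2)$ valid there; the key input is that on $I_i$ one has $\partial_\zeta H(0,\zeta) = 2f(\zeta) = 2f_i$ (constant) while the $V_\zeta\rho^2$ and $\rho V_\rho^2 V_{\rho\zeta}/(\cdots)$ contributions to $F$ are subleading or vanish in the $\rho \to 0$ limit along $I_i$. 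Combining $F_{i+1} = -c^{-1}(H(0,z_{i+1}) - f_{i+1}^2/f_{i+1}')$ and $F_{i-1} = -c^{-1}(H(0,z_i) - f_i^2/f_{i-1}')$ with the explicit evaluation $H(0,z_{i+1}) - H(0,z_i) = 2\int_{z_i}^{z_{i+1}} f_i\,\td\zeta = 2f_i(z_{i+1}-z_i)$, and $f_{i+1} = f_i$, yields
\[
F_{i+1} - F_{i-1} = -c^{-1}\left( 2f_i(z_{i+1}-z_i) - f_i^2\left( \frac{1}{f_{i+1}'} - \frac{1}{f_{i-1}'} \right) \right),
\]
which is \eqref{eq:Fdiff0}.

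\emph{Main obstacle.} The routine but delicate part is the $f_i' = 0$ case: one must be careful that the $\rho^2\log\rho$ terms appearing in $H$ and in $V_\zeta$ (see \eqref{eq:dHaxis}) cancel in $F$ as they did in the $f_i'\neq 0$ case of Lemma~\ref{lem:rodv}, and that the formula $F_j = -c^{-1}(H(0,\zeta) - f(\zeta)^2/f_j')$ — derived under the assumption $f_j'\neq 0$ — is being applied only on the neighbouring rods $I_{i-1}$ and $I_{i+1}$ where $f_{i\mp 1}'\neq 0$, not on $I_i$ itself. The bookkeeping of which rod-constant is evaluated at which endpoint, and the continuity of $H(0,\zeta)$ and $f(\zeta)$ across the relevant points, is what makes the two jump formulas come out cleanly.
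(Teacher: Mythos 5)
Your proposal is correct and follows essentially the same route as the paper: apply the rod formula $F=-c^{-1}\bigl(H(0,\zeta)-f(\zeta)^2/f'(\zeta)\bigr)+O(\rho^2)$ from the proof of Lemma \ref{lem:rodv} on the rods with nonzero slope, use continuity of $H(0,\zeta)$ and $f$ at the turning points, and in the $f_i'=0$ case integrate $\partial_\zeta H(0,\zeta)=2f_i$ across $I_i$ to get $H(0,z_{i+1})-H(0,z_i)=2f_i(z_{i+1}-z_i)$. The only superfluous element is your discussion of $F_\zeta|_{\rho=0}$ on $I_i$ itself, which the argument never needs.
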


\begin{proof}
The first case follows immediately from \eqref{eq:Faxis} using that $H(0, \zeta)$ is continuous at $\zeta=z_i$.   In the second case, note that $f_i=f(\zeta)=f_{i+1}$ for all $\zeta \in I_i$ and hence $\partial_\zeta H(0, \zeta)= 2 f(\zeta)= 2 f_i$ integrates to give $H(0, z_{i+1})- H(0, z_i)= 2 f_i ( z_{i+1}-z_i)$, so the result follows from \eqref{eq:Faxis}, noting that we must have $f_{i+1}'\neq 0$ and $f_{i-1}'\neq 0$.
\end{proof}

We are now ready to put all these ingredients together and complete the proof of Theorem \ref{thm:main}. For the argument that follows, we suppose $n>1$ so there are at least three rods. (The case $n=1$ will be analysed in the proof of Lemma \ref{lem:n} below.) Smoothness requires that the bases $(v_{j-1}, v_j)$ and  $(v_j, v_{j+1})$ of the  Killing fields must be related by a $GL(2, \mathbb{Z})$ transformation; this is equivalent to
\begin{equation}
v_{j-1}+ \epsilon_j v_{j+1}= l_j v_j  \label{eq:vreg}
\end{equation}
for each $j=1, \dots, n$ where $l_j\in \mathbb{Z}$ and $\epsilon_j=\pm 1$. There are three cases to consider.

Firstly, consider the case where $f_{i-1}', f_i', f_{i+1}'$ are all nonvanishing. Then by Lemma \ref{lem:rodv} the condition \eqref{eq:vreg} is equivalent to the pair of equations
\[
f_{j-1}'+\epsilon_j f_{j+1}'= l_j f'_j, \qquad f_{j-1}'F_{j-1}+ \epsilon_j f_{j+1}'F_{j+1}= l_j f_j' F_j  \; .
\]
By eliminating $f_j'$ from the second equation one finds
\[
f_{j-1}'(F_j -F_{j-1}) = \epsilon_j f_{j+1}' (F_{j+1}-F_j) 
\]
and using \eqref{eq:Fdiff} this gives
\[
f_{j+1}^2= \epsilon_j f_j^2 \frac{( f_j'- f_{j-1}')}{( f_{j+1}'- f_j')}   \; .
\]
But since $f_{j-1}'<f_j'< f_{j+1}'$ this implies $\epsilon_j=1$.  Therefore, our regularity constraints reduce to the pair of equations
\begin{equation}
f_{j-1}'+ f_{j+1}'= l_j f'_j, \qquad f_{j+1}^2=  f_j^2 \frac{( f_j'- f_{j-1}')}{( f_{j+1}'- f_j')}   \; .   \label{eq:reg}
\end{equation}

Secondly, in the case where $f_{j-1}'=0$ and $f_{j}', f_{j+1}'$ are nonvanishing, condition \eqref{eq:vreg} is equivalent to 
\[
 \epsilon_j f_{j+1}'= l_j f_j', \qquad  - c^{-1} f_{j-1}^2+  \epsilon_j f_{j+1}'F_{j+1}= l_j f_j' F_j
\]
and eliminating $f_j'$ gives $ c^{-1} f_{j-1}^2 = \epsilon_j f_{j+1}'(F_{j+1}-F_j)$ and using \eqref{eq:Fdiff} gives $f_{j-1}^2 = -\epsilon_j f_{j+1}^2 (f_{j+1}'-f_{j}')/f_{j}'$. Since $0=f_{j-1}'<f_j'<f_{j+1}'$ this implies $\epsilon_j=-1$ and hence using the fact that $f_{j-1}=f_j$, we deduce that the regularity conditions are again given by \eqref{eq:reg} with $f_{j-1}'=0$.  The same argument shows that in the case with $f_{j-1}', f_j'$ nonzero and $f_{j+1}'=0$ the regularity constraints are again given by \eqref{eq:reg} with $f_{j+1}'=0$.

The final case is $f_j'=0$ and hence $f_{j-1}'<0$ and $f_{j+1}'>0$.  The condition \eqref{eq:vreg} now gives
\[
f_{j-1}'+ \epsilon_j f_{j+1}'=0, \qquad f_{j-1}'F_{j-1}+ \epsilon_j f_{j+1}' F_{j+1}=- c^{-1} f_j^2 l_j
\]
and hence from the first we deduce $\epsilon_j=1$ and $f_{j-1}'=- f_{j+1}'$ and the second together with \eqref{eq:Fdiff0} gives
\begin{equation}
z_{j+1}- z_j = \frac{(2+l_j) f_j}{2 f_{j+1}'}  \; .   \label{eq:reg0}
\end{equation}

\begin{lemma}\label{lem:n}
Let $(M, g)$ be a smooth toric Hermitian ALE instanton ($\mathcal{W}^{+}\neq0$). Then, the number of fixed points of the toric symmetry is 
\begin{align}\label{eq:n}
 n=2.
\end{align}
\end{lemma}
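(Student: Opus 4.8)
The strategy is to show that $n \le 2$ using the regularity constraints derived above, and then to rule out $n=1$ and $n=0$ using the ALE asymptotics (in particular $A=0$ from Proposition \ref{prop:fVsol}) together with the requirement that there be at least one fixed point. First I would set up the admissible slope data: the slopes $f'_0 = -1 < f'_1 < \dots < f'_{n-1} < f'_n = 1$ are determined by the positive constants $a_i$ via $f'_i = f'_{i-1} + 2a_i$, and exactly one semi-infinite rod has $f'_0 = -1$, the other $f'_n = 1$. The heart of the argument is that the second equation in \eqref{eq:reg}, namely $f_{j+1}^2 = f_j^2 (f_j' - f_{j-1}')/(f_{j+1}' - f_j')$, together with $f_j := f(z_j) > 0$ (positivity of $f$ from Proposition \ref{prop:fVsol}), severely constrains how many rods there can be. I would combine this with \eqref{eq:reg0} for any interior rod with $f'_j = 0$, and with the fact that the $z_i$ are strictly increasing so all the ``length'' quantities $z_{j+1} - z_j$ must be positive.

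\textbf{Key steps.} (i) Observe that an interior zero slope $f'_j = 0$ can occur for at most one $j$, since the slopes are strictly increasing through zero. (ii) Treat first the generic case where all interior slopes are nonzero: then for each $j = 1, \dots, n-1$ we have \eqref{eq:reg}, and I would show by an inductive/telescoping argument on $f_{j+1}^2/f_j^2 = (f_j' - f_{j-1}')/(f_{j+1}' - f_j')$ that having three or more interior rod-junctions forces a contradiction — either some $f_j$ fails to be positive, or the integer condition $f'_{j-1} + f'_{j+1} = l_j f'_j$ with $l_j \in \mathbb{Z}$ cannot be met consistently with $-1 < f'_1 < \dots < 1$. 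Concretely, with $n=2$ there is a single interior point $z_1$ (wait — recount: $n$ fixed points $z_1, \dots, z_n$, with $z_1$ and $z_n$ each adjacent to a semi-infinite rod); for $n = 3$ there are two interior junctions $z_2$ (between the two finite rods) — I would write out \eqref{eq:reg} at the junction(s) and show the resulting Diophantine-type system has no solution with the sign constraints, mirroring the ALF analysis of \cite{Biquard:2021gwj} but with the ALE-specific endpoint slopes $\mp 1$. (iii) Handle the case with an interior zero slope via \eqref{eq:reg0}: $z_{j+1} - z_j = (2 + l_j) f_j /(2 f'_{j+1}) > 0$ forces $l_j \ge -1$, and then combine with \eqref{eq:reg} at the neighbouring junctions to again cap the count. (iv) Finally exclude $n = 1$: with a single fixed point $z_1$, $f(\zeta) = a_1|\zeta - z_1|$ with $a_1 = 1$ (from $\sum a_i = 1$), so $f_1 = f(z_1) = 0$; but then Lemma \ref{lem:rodv} in the case $f'_i = 0$ does not apply (both adjacent rods have $f'_0 = -1 \ne 0$, $f'_1 = 1 \ne 0$), and one checks directly from the explicit $V = V_0(\rho, \zeta - z_1)$ that the metric degenerates (e.g. $W$ or $e^{2\nu}$ is singular at $z_1$, or \eqref{eq:noconical} cannot be satisfied), ruling out a smooth fixed point. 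Since by assumption there is at least one fixed point, $n \ge 1$, and combining with $n \le 2$ and the exclusion of $n = 1$ gives $n = 2$.

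\textbf{Main obstacle.} The delicate part is step (ii): extracting a clean contradiction for $n \ge 3$ from the coupled constraints \eqref{eq:reg}. The multiplicative recursion $f_{j+1}^2/f_j^2 = (\Delta f'_j)/(\Delta f'_{j+1})$ (ratios of successive slope increments) together with the integrality of $l_j = (f'_{j-1} + f'_{j+1})/f'_j$ is a genuinely arithmetic constraint, and one must be careful that the $f_j$ are the values of the \emph{already-determined} piecewise-linear $f$ at its breakpoints (so the $f_j$, $z_j$, and slopes are not independent). I expect to need the concavity of $V$ in $\zeta$ and the positivity $f > 0$ crucially to pin down signs, and to argue — as in \cite{Biquard:2021gwj} — that any admissible chain of length $\ge 3$ would require a slope value strictly between $-1$ and $1$ that violates one of the integer relations. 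The $n = 1$ exclusion in step (iv) is conceptually the subtlest remaining point because it is exactly the configuration that would be ``Taub-NUT-like'' in the ALF story, and here one must use the ALE fall-off ($A = 0$) to see it does not close up smoothly.
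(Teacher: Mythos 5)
Your overall shape matches the paper's strategy (the regularity system \eqref{eq:reg}, sign/monotonicity of the slopes, a direct check for $n=1$), but the central claim of your step (ii) --- that \eqref{eq:reg} together with the sign constraints and the ALE endpoint slopes $f_0'=-1$, $f_n'=1$ admits \emph{no} solution once there are three or more junctions --- is not true, and this is exactly where your argument would fail. What the sign analysis actually gives is only $n\le 3$: for any three consecutive rods $I_{j-1},I_j,I_{j+1}$ one shows $f_{j-1}'<0<f_{j+1}'$, by playing the first equation of \eqref{eq:reg} (which forces $l_j\ge 2$ when the three slopes lie on one side of zero) against the second (which, using the monotonicity of the values $f_j=f(z_j)$, forces $l_j<2$); applying this to overlapping triples caps $n$ at $3$. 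For $n=3$ the resulting ``Diophantine'' system is perfectly solvable: with $p=-f_1'$, $q=f_2'\in(0,1)$, the conditions \eqref{eq:n3reg1}--\eqref{eq:n3reg2} and the monotonicity of $f$ force $l_1=l_2=1$, which is consistent --- it is precisely the Chen--Teo rod data, which does occur in the ALF classification with the same endpoint slopes $\mp1$. So no arithmetic contradiction exists, and a telescoping/induction on \eqref{eq:reg} cannot by itself yield $n\le 2$.

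The exclusion of $n=3$ therefore requires an input your proposal never invokes: with $l_1=l_2=1$, the smoothness relations \eqref{eq:vreg} read $v_0+v_2=v_1$ and $v_1+v_3=v_2$, hence $v_0=-v_3$, i.e.\ the rod vectors of the two semi-infinite rods are parallel. This is compatible with AF asymptotics but not with ALE, where the cross-section at infinity is a lens space and the semi-infinite rod vectors must be linearly independent. (The same mechanism, not arithmetic, is what later forces $f_1'=0$ in the $n=2$ analysis.) The genuinely ALE inputs are thus $A=0$ and this non-parallelism at infinity; the endpoint slopes $\mp1$ alone, which also hold in the ALF case, do not suffice. Your step (iv) is acceptable in outline, but note the actual failure for $n=1$ is global rather than local: with $A=0$ one has $V=V_0(\rho,\zeta-z_1)$ and \eqref{WV} then gives $W\equiv 0$ identically, so the metric is degenerate everywhere, not merely singular or conically singular at $z_1$.
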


\begin{remark}
The result \eqref{eq:n} is to be contrasted with the ALF case, where the BG classification \cite{Biquard:2021gwj} shows that there are smooth toric Hermitian ALF instantons for $n=1,2,3$, namely Taub-NUT for $n=1$, Kerr and Taub-bolt for $n=2$, and Chen-Teo for $n=3$. 
Our result \eqref{eq:n} shows that not only is there no ALE Hermitian analogue of Chen-Teo, but there is no ALE Hermitian analogue of Taub-NUT either. One may have thought that the $n=1$ ALE case could be flat space (written as a single-centred ALE Gibbons-Hawking solution), however, as we saw in Remarks \ref{remark:HK1} and \ref{remark:HK2}, the corresponding conformal K\"ahler structure is not global.
\end{remark}

\begin{proof}
First consider the $n=1$ case, so the rod structure is given by the two semi-infinite rods.  Then, noting $A=0$, we deduce from \eqref{eq:f}  that $f(\zeta) = | \zeta-z_1|$ and hence from \eqref{eq:Vsol} the harmonic function is  simply $V= V_0(\rho, \zeta-z_1)$. It is easily checked that this choice of harmonic function gives $W=0$ identically. Hence, this case is not allowed.

Consider then $n>1$, and three consecutive rods $I_{j-1}, I_j, I_{j+1}$. We will show that the slopes $f_{j+1}'>0$ and $f_{j-1}'<0$.  It then immediately follows that $n\leq 3$. 

Suppose $f_{j+1}' \leq 0$, so $f_{j-1}'<f'_j<0$. The first equation in \eqref{eq:reg} implies $l_j > 0$. If $l_j=1$ then the first equation says $f_{j+1}'=f_j'-f_{j-1}'>0$ which is a contradiction.  Hence $l_j\geq 2$.  On the other hand, since $f_j'<0$ implies $f_{j+1}<f_j$, the second equation in \eqref{eq:reg} implies $f_j'-f_{j-1}'< f'_{j+1}-f_j'$ which is the same as $f_{j+1}'+f_{j-1}'> 2f_j'$. Comparing to the first equation shows that $l_j<2$, which is a contradiction.  Therefore, $f_{j+1}'>0$.

Now suppose $f_{j-1}'\geq 0$, so $f_{j+1}'>f_j'>0$. The first equation then implies $l_j>0$ and if $l_j=1$ it gives $f_{j-1}'=f_j' - f_{j+1}'<0$ which is a contradiction. Hence $l_j\geq 2$.  But $f_j'>0$ implies $f_{j+1}>f_j$ so the second equation implies $f_j'-f_{j-1}'> f'_{j+1}-f_j'$ which is the same as $f_{j+1}'+f_{j-1}'<  2f_j'$. Comparing to the first equation then gives $l_j<2$, which is a contradiction.  Therefore, $f_{j-1}'<0$.

Now consider the $n=3$ case. Thus, from the above we must have $-1=f_0'<f_1'<0<f_2'<f_3'=1$. Following BG, set $p=-f_1'$ and $q= f_2'$ so $p, q\in (0,1)$. The first of the regularity conditions \eqref{eq:reg} for $j=1$ and $j=2$ read
\begin{equation}
1-q=p l_1, \qquad 1-p=q l_2  \label{eq:n3reg1}
\end{equation}
respectively, which in particular imply $l_1, l_2>0$. The second of the regularity conditions \eqref{eq:reg} for $j=1$ and $j=2$ read
\begin{equation}
f_2^2(p+q)= f_1^2(1-p), \qquad f_2^2(p+q)= f_3^2(1-q)  \; .  \label{eq:n3reg2}
\end{equation}
On the other hand, since $f_1'=f'(z)|_{I_1}<0$ it follows that $f_2=f(z_2)<f(z_1)=f_1$ and hence the first equation in \eqref{eq:n3reg2} implies $p+q>1-p$, which means $2p>1-q=p l_1$ where the final equality follows from the first equation in \eqref{eq:n3reg1}.  We deduce that $l_1<2$ and hence $l_1=1$ as it is an integer.  Similarly, $f_2'>0$ implies $f_2<f_3$ and hence the second equation in \eqref{eq:n3reg2} implies $2q>1-p=q l_2 $ where the final equality follows from the second equation in \eqref{eq:n3reg1}.   Hence, we have $l_2<2$ and therefore $l_2=1$.   Now, the $j=1$ and $j=2$ cases of the smoothness condition \eqref{eq:vreg} give
\[
v_0+v_2= v_1, \qquad v_1+v_3=v_2
\]
and eliminating say $v_1$ shows that $v_0=-v_3$.  This means that the rod vectors of the semi-infinite rods $I_0$ and $I_3$ are parallel, which is incompatible with ALE asymptotics.  More explicitly, if we choose a basis of rod vectors $v_0=(0,1)$ and $v_1=(1,0)$, then $v_2=(1, -1)$ and $v_3=(0,-1)$, which is only compatible with AF asymptotics.\footnote{This is the rod structure of the Chen-Teo instanton.} Therefore, the $n=3$ case is not possible.
\end{proof}

Let us now examine the remaining case $n=2$, which consists of three rods $I_0,  I_1, I_2$. Now Lemma \ref{lem:n}  says that $-1=f_0'<f_1'<f_2'=1$, so in particular $f_1'$ may be positive or negative or vanish. If $f_1' \neq 0$ the regularity conditions are \eqref{eq:reg} and reduce to
\[
l_1 f_1'=0 , \qquad f_2^2= f_1^2 \frac{f_1'+1}{1-f_1'}  \; ,
\]
so the first implies $l_1=0$.
But  \eqref{eq:vreg} reduces to $v_0+v_2=l_1 v_1$, which implies the rod vectors of the semi-infinite rods $v_0=-v_2$ are parallel, which is incompatible with ALE asymptotics.  Therefore, ALE implies $f_1'=0$, in which case, the regularity conditions \eqref{eq:reg0} reduce to
\begin{equation}
z_2-z_1= \frac{(2+ l_1) f_1}{2}   \; .  \label{eq:n2reg}
\end{equation}
Now, recalling that ALE also requires $A=0$, \eqref{eq:f} reduces to
\[
f(\zeta) = \tfrac{1}{2} | \zeta-z_1|+ \tfrac{1}{2} | \zeta-z_2|
\]
which in particular implies  $f_1= \tfrac{1}{2} (z_2-z_1)$. 
Combining with the regularity condition \eqref{eq:n2reg} gives $l_1=2$, so we deduce  the rod vectors are related by $v_0+v_2=2 v_1$. Hence choosing a basis $v_0=(0,1)$ and $v_1=(1,0)$ we have $v_2=( 2, -1)$, so the metric is ALE with  cross-section $S=L(2,1)$ at infinity.  The full solution in this case is given by the harmonic function
\[
V= \tfrac{1}{2} V_0(\rho, \zeta-z_1)+ \tfrac{1}{2} V_0(\rho , \zeta-z_2)
\]
As shown in Example \ref{ex:EH} this corresponds to the Eguchi-Hanson instanton. This completes the proof of Theorem \ref{thm:main}.

\appendix

\section{Regularity of Pleba\'nski-Demia\'nski}
\label{appendix:PD}

Consider the Pleba\'nski-Demia\'nski (PD) metric as given in~\cite{Chen:2015vva, Tod:2024zpa}, 
\begin{equation}\label{eq:PDmetric}
g= \frac{1}{(p-q)^2}\left[ \frac{(1-p^2 q^2) \td p^2}{P}+ \frac{P(\td \phi - q^2 \td \tau)^2}{(1-p^2 q^2)} - \frac{(1-p^2 q^2) \td q^2}{Q}-  \frac{Q(\td \tau - p^2 \td \phi)^2}{(1-p^2 q^2)}  \right]
\end{equation}
where $P=F(p)$, $Q= F(q)$ and $F(x)=a_0 x^4+a_3 x^3 + a_2 x^2 +a_1 x + a_0$. Without loss of generality we can assume $a_0>0$. The metric is Ricci-flat, ALE, toric, and has a (local) Hermitian non-K\"ahler structure. The Weyl tensor is self-dual iff $a_3=a_1$, and the metric is flat iff $a_3=a_1=0$. We wish to investigate if there exist parameter choices such that \eqref{eq:PDmetric} extends to a smooth metric on a complete Riemannian manifold. 

\subsection{Coordinate ranges}\label{sec:PDcoord}
We will  assume that the coordinate ranges are such that $1-p^2 q^2>0$ and $p\neq q$  in order to avoid a curvature singularity at $p^2q^2=1$ and the conformal factor blowing up  \cite{Tod:2024zpa}. Riemannian signature requires $g_{pp}> 0$ and $g_{qq}> 0$ and hence $P>0$ and $Q<0$.  We will also assume that $F$ has four real roots $p_1<p_2<p_3<p_4$ so we can write $F(x)=a_0 (x-p_1)(x-p_2)(x-p_2)(x-p_4)$ with $p_1 p_2p_3 p_4=1$. 
Thus we must restrict $p\in I_1$ and $q\in I_2$ where $I_1$ and $I_2$ are open intervals between adjacent roots where $P>0$ and $Q<0$ respectively (so $P=0$ at the endpoints of $I_1$ and $Q=0$ at the endpoints of $I_2$).  The possibility of $I_1, I_2$ being between $p_1$ or $p_4$ and infinity is disallowed by our assumption $p^2q^2<1$.  Since $a_0>0$ there are two cases: $I_1=(p_2, p_3)$ and either $I_2=(p_1, p_2)$ or $I_2=(p_3, p_4)$.  However, by sending $(p, q)\to (-p, -q)$ these two cases are interchanged and hence without loss of generality we may assume $I_2=(p_1, p_2)$.  Therefore, we can take our coordinate range to be the rectangle $p_2< p<p_3$ and $p_1<q<p_2$ and note $p>q$. 

Before moving on we comment on the assumption that $F$ has four distinct real roots. If $F$ has repeated real roots then these will correspond to new asymptotic ends rather than fixed point sets of the toric symmetry.  If $F$ has a pair of real roots $p_1<p_2$ and a pair of complex conjugate roots then both $p, q \in (p_1, p_2)$ but then $P$ and $Q$ will have the same sign which is incompatible with Riemannian signature.  If $F$ has no real roots then $p, q \in \mathbb{R}$ and it is not possible to avoid the curvature singularity at $1-p^2 q^2=0$.

The region $p\to p_{2}^{+}$, $q\to p_{2}^{-}$ corresponds to an asymptotic end. To see this, change coordinates from $(\tau,\phi,p,q)$ to $(\psi,\varphi,r,\theta)$ defined by
\begin{align*}
\tau = \tfrac{(1+p_2^2)\psi-(1-p_2^2)\varphi}{P'(p_2)}, \qquad
\phi = \tfrac{(1+p_2^2)\psi+(1-p_2^2)\varphi}{P'(p_2)}, \qquad
p = p_2+\frac{c \cos^2(\tfrac{\theta}{2})}{2r^2}, \qquad 
q = p_2-\frac{c \sin^2(\tfrac{\theta}{2})}{2r^2},
\end{align*}
where $c$ is a non-zero constant. Then, as $r\to\infty$, a calculation shows that \eqref{eq:PDmetric} becomes
\begin{align*}
g \approx \frac{8(1-p_2^4)}{cP'(p_2)}\left[ \td{r}^2+\frac{r^2}{4}\left((\td\psi+\cos\theta\td\varphi)^2+\td\theta^2+\sin^2\theta\td\varphi^2 \right) \right].
\end{align*}
Therefore the metric \eqref{eq:PDmetric} is ALE provided the angles are suitable identified.

\subsection{Rod structure}
The two commuting Killing fields are $\partial_{\tau},\partial_{\phi}$, and the determinant of the Gram matrix $G$ is $\rho^2=-PQ/(p-q)^4$. Therefore ${\rm rank}(G)=1$ when either $P=0$ or $Q=0$, and ${\rm rank}(G)=0$ when both $P=0=Q$. Inspecting the rectangle $\{p_2< p<p_3, \, p_1<q<p_2\}$, we see that there are four rods $\mathcal{R}_1,...,\mathcal{R}_4$, and three fixed points of the torus symmetry:
\begin{itemize}
\item $\mathcal{R}_1$: $p=p_2$, $p_1<q<p_2$, rod vector $\ell_1 = \frac{2}{P'(p_2)}(p_2^2\partial_{\tau}+\partial_{\phi})$,
\item $\mathcal{R}_2$: $p_2<p<p_3$, $q=p_1$, rod vector $\ell_2 = \frac{2}{Q'(p_1)}(\partial_{\tau}+p_1^2\partial_{\phi})$,
\item $\mathcal{R}_3$: $p=p_3$, $p_1<q<p_2$, rod vector $\ell_3 = \frac{2}{P'(p_3)}(p_3^2\partial_{\tau}+\partial_{\phi})$,
\item $\mathcal{R}_4$: $p_2<p<p_3$, $q=p_2$, rod vector $\ell_4 = \frac{2}{Q'(p_2)}(\partial_{\tau}+p_2^2\partial_{\phi})$.
\end{itemize}
The rod vectors $\ell_1,...,\ell_4$ are $2\pi$-periodic, and the overall scaling in each case has been chosen to avoid a conical singularity, using the condition \eqref{eq:noconical}. In the analysis that follows, we will assume the generic situation in which consecutive rod vectors are not parallel. The case in which two of them become parallel occurs when two fixed points merge, and corresponds to the self-dual case, which will be analysed in section \ref{sec:SDPD}.

The fixed points are located at $(p,q)=(p_2,p_1),(p_3,p_1),(p_3,p_2)$. Smoothness at these points is equivalent to requiring that the $U(1)^2$-action generated by the bases $(\ell_1, \ell_2)$, $(\ell_2, \ell_3)$ and $(\ell_3, \ell_4)$ are pairwise related by a $GL(2, \mathbb{Z})$ transformation. For the pair $\{(\ell_1,\ell_2),(\ell_2, \ell_3)\}$, and for the pair $\{(\ell_2, \ell_3),(\ell_3, \ell_4)\}$, these conditions are respectively equivalent to
\begin{align}\label{eq:reg1}
 \ell_3 = - \epsilon \ell_1+ m \ell_2,  \qquad
 \ell_4 = - \bar{\epsilon} \ell_2 + n  \ell_3
\end{align}
where $\epsilon=\pm 1$, $m\in \mathbb{Z}$, and $\bar{\epsilon}=\pm 1$, $n\in \mathbb{Z}$. Using the explicit form of the rod vectors, these equations lead to
\begin{align*}
m &= \frac{Q'(p_1)}{P'(p_3)} \frac{(p_3^2-p_2^2)}{(1-p_1^2p_2^2)}, \quad 
\epsilon= -\frac{P'(p_2)}{P'(p_3)} \frac{(1-p_1^2p_3^2)}{(1-p_1^2p_2^2)}, \quad \\
n &= \frac{P'(p_3)}{Q'(p_2)} \frac{(p_2^2-p_1^2)}{(1-p_1^2p_3^2)}, \quad 
\bar{\epsilon}= -\frac{Q'(p_1)}{Q'(p_2)} \frac{(1-p_2^2p_3^2)}{(1-p_1^2p_3^2)}  \; .
\end{align*}
Recalling $1-p^2q^2>0$, and using $P'(p_2)>0$, $P'(p_3)<0$, we deduce $\epsilon>0$, so $\epsilon=1$. Similarly, $Q'(p_1)<0$, $Q'(p_2)>0$ imply $\bar\epsilon>0$ and thus $\bar\epsilon=1$.
Notice also that $n\epsilon$ and $m/(\epsilon \bar{\epsilon})$ simplify, and using $\epsilon=1=\bar\epsilon$ we deduce
\begin{align}\label{eq:mn}
m= \frac{p_3^2-p_2^2}{1-p_2^2p_3^2}, \qquad 
n= \frac{p_1^2-p_2^2}{1-p_1^2p_2^2}.
\end{align}

\subsection{Regularity constraints}
We wish to determine if given $m, n \in \mathbb{Z}$ there exist $p_1<p_2<p_3<p_4$ satisfying \eqref{eq:mn}.  The constraint $p_1p_2p_3p_4=1$ implies there are three possibilities: (i) all roots positive, (ii)  $p_1<p_2<0<p_3<p_4$, (iii) all roots negative.

In case (i), we see that $p_1^2<p_2^2<p_3^2$ and hence $m>0$ and $n<0$. We can invert \eqref{eq:mn} to get 
\begin{align*} p_2^2= \frac{p_1^2+ |n|}{ 1+ |n| p_1^2}, \qquad  p_3^2= \frac{p_2^2+m}{1+m p_2^2} \; ,
\end{align*}
so for given $n, m$ this gives $p_2, p_3$ in terms of $p_1$.  The second equation gives 
\begin{align*}
p_3^2-p_2^2= \frac{m (1-p_2^4)}{1+m p_2^2} \; ,
\end{align*}
which implies $p_2^4<1$. Hence $p_2^2<1$ and the first equation  then implies $$\frac{p_1^2+ |n|}{ 1+ |n| p_1^2}<1 \; ,$$ which is true iff  $|n|-1 < p_1^2 (|n|-1)$. Since $|n|  \in \mathbb{N}$ the latter inequality implies $|n|-1>0$  and hence $p_1^2>1$. But $p_1^2<p_2^2<1$ which is a contradiction. 

Consider now case (ii). We have $p_2^2<p_1^2$ and hence $n>0$. In particular this implies $$\frac{1-p_2^2 p_3^2}{1-p_1^2 p_3^2}>1 \; .$$ Also, $$-\frac{Q'(p_1)}{Q'(p_2)}= \left(\frac{p_3-p_1}{p_3-p_2}\right)\left( \frac{ p_4-p_1}{p_4-p_2}\right)>1\; ,$$ where the inequality follows since each factor in the round brackets is $>1$ by the ordering of the roots.  It therefore follows that $\bar{\epsilon}>1$ which is a contradiction.  Note this case could include $m>0, m=0$ or $m<0$ depending on the sign of $p_3^2-p_2^2$.

Finally, in case (iii) we have $p_1<p_2<p_3<p_4<0$ and hence $p_2^2<p_1^2$ and $p_3^2<p_2^2$ so $n>0$ and $m<0$.  Since $p_2^2<p_1^2$ this case can be ruled out as in case (ii), or by a similar argument to case (i).

To summarise we have shown that, assuming the three fixed points are distinct, one can never fully satisfy the regularity constraints for the PD metric \eqref{eq:PDmetric}, consistently with Lemma \ref{lem:n}.

\subsection{The self-dual case}\label{sec:SDPD}
As mentioned, the PD metric is self-dual iff $a_3=a_1$. Note this implies that if $p_i$ is a root then so is $1/p_i$ (recall no root can vanish since $p_1p_2p_3p_4=1$). If we let $p_1<p_2$ be the first two roots, then $p_3=1/p_i$ for some $i$. One can show that the only possibilities compatible with the ordering of the roots are: (a) $p_3=1/p_2$, $p_4=1/p_1$, and (b) $p_3=1/p_4$, $p_2=1/p_1$. We analyse these separately.

First consider case (a). This is consistent with all roots positive or all roots negative, that is, either $0<p_1<p_2<1/p_2<1/p_1$ or $p_1<p_2<1/p_2<1/p_1<0$.  If the roots are all positive then $p_1^2<p_2^2<1$ and if the roots are all negative then $p_1^2>p_2^2>1$.  In any case, note that $p_3=1/p_2$ implies that the vectors $\ell_3$ and $\ell_4$ are parallel, so the above analysis which assumes that $(\ell_3, \ell_4)$ is a basis does not apply.  However, since $p_1<p_3=1/p_2$ and both $p_1 , p_2$ have the same sign we have $p_1^2\neq 1/p_2^2$ and hence $(\ell_1, \ell_2)$ is a basis. Similarly, $p_1^2\neq 1/p_3^2=p_2^2$ since $p_1<p_2$ and they have the same sign, so $(\ell_2, \ell_3)$ is a basis.  Therefore, the first equation in \eqref{eq:reg1} still applies, but the second does not apply. 
We now have $$m= \frac{(1-p_1^2)(1+p_2^2)}{1-p_1^2 p_2^2} \; , \qquad \epsilon= \frac{p_2^2-p_1^2}{1-p_1^2p_2^2} \; ,$$ where we have used the simplifications $Q'(p_1)/P'(p_3)= p_2^2(1-p_1^2)/(1-p_2^2)$ and $-P'(p_2)/P'(p_3)= p_2^2$  (note this latter equation shows that $\ell_3=-\ell_4$).  In the case of all positive roots we have that $1>p_2^2>p_1^2$ and $1-p_1^2p_2^2>0$ so $\epsilon=1$ and $m>0$, and in the case of all negative roots we have $1<p_2^2<p_1^2$ and $1-p_1^2 p_2^2<0$ so again $\epsilon=1$ and $m>0$.  However, solving $\epsilon=1$ gives $p_2^2=1$ which is a contradiction. Hence case (a) is not allowed.

Now consider case (b).  This is only consistent with two roots negative and two positive, that is, $p_1<1/p_1<0<p_3<1/p_3$.  Thus $p_1^2>1$ and $p_3^2<1$. In this case the rod vectors $\ell_1$ and $\ell_2$ are collinear (since $p_2=1/p_1$).  Hence the first equation in \eqref{eq:reg1} does not apply since $\ell_1, \ell_2$ is not a basis.  On the other hand  $(\ell_2, \ell_3)$ is basis  iff $p_1\neq - 1/p_3$ and $(\ell_3 , \ell_4)$ is a basis iff $p_3\neq -1/p_2=-p_1$. But since $p_1<-1$ and $0<p_3<1$ the latter must always be the case, that is, $(\ell_3, \ell_4)$ is always a basis. Hence the second equation in \eqref{eq:reg1} applies iff $p_1\neq - 1/p_3$.  If $p_1=-1/p_3$ then there are only two independent rod vectors $\ell_1$ and $\ell_4$, which gives the rod structure of flat $\mathbb{R}^4$; indeed, in this case $p_1=-p_4$ and $p_2=-p_3$ so $P= a_0(p^2- p_3^2)(p^2-1/p_3^2)$, hence $a_3=a_1=0$. Thus it remains to consider the case where $p_1\neq -1/p_3$ so that the second equation in \eqref{eq:reg1} applies.  Now 
$-Q'(p_1)/Q'(p_2)=p_1^2$  (so $\ell_1=-\ell_2$) and $\bar{\epsilon}= (p_1^2-p_3^2)/(1-p_1^2p_3^2)$. But since $p_1^2>p_3^2$  this implies $\bar{\epsilon}=1$ which then gives $p_1^2=1$: a contradiction.  Hence case (b) only occurs in the flat case.

\begin{remark}\label{remark:SDPD}
 We also note that the self-dual PD solution can be shown to be isometric to a two-centre Gibbons-Hawking metric with different masses (cf. \cite{Casteill}), which may suggest that, by adjusting the parameters, one could obtain the Eguchi-Hanson metric as a special case of \eqref{eq:PDmetric}. However, the analysis in this section shows that, if the PD metric \eqref{eq:PDmetric} is required to have Euclidean signature and to live in the manifold defined in section \ref{sec:PDcoord}, the metric \eqref{eq:PDmetric} does not contain the Eguchi-Hanson instanton.
\end{remark}

\section{The conformal Killing-Yano equation}
\label{appendix:CKY}

In this appendix we study the conformal Killing-Yano (CKY) equation on a four-dimensional Riemannian ALE manifold $(M,g)$, as we need this for Proposition \ref{prop:falloffOmega}. 

\subsection{Generalities}
We refer to Penrose and Rindler \cite{PR1, PR2} for background on the 2-spinor formalism. The CKY equation for a 2-form $Z_{ab}$ is 
\begin{align}\label{CKY}
L(Z)_{abc}:= \nabla_{a}Z_{bc} -\nabla_{[a}Z_{bc]} + 2g_{a[b}\xi_{c]} = 0
\end{align}
where $\xi_{a}=\frac{1}{3}\nabla^{b}Z_{ab}$. If $Z_{ab}$ is CKY, then its dual is also CKY, so we can assume $Z_{ab}$ to be self-dual. Then \eqref{CKY} adopts a simple form in spinor terms: writing $Z_{ab}=K_{AB}\epsilon_{A'B'}$ (with $K_{AB}=K_{BA}$), \eqref{CKY} is equivalent to the two-index twistor equation (or valence-2 Killing spinor equation):
\begin{align}\label{KSE}
\nabla_{A'(A}K_{BC)}=0.
\end{align}
These are $2\times 4=8$ real scalar equations. They can be conveniently written in the Geroch-Held-Penrose (GHP) formalism \cite[Section 4.12]{PR1}, adapted to Riemannian signature. We first choose an arbitrary unprimed dyad $(o^{A},\iota^{A})$, with $o_{A}\iota^{A}=1$ and\footnote{In Riemannian signature, spinor conjugation $\dagger$ maps any spinor $o^{A}$ to a linearly independent spinor $(o^{A})^{\dagger}$, is anti-linear, and satisfies $\dagger^2=-1$ (resp. $\dagger^2=+1$) on spinors with an odd (resp. even) number of indices. For example, $((o^{A})^{\dagger})^{\dagger}=-o^{A}$. The operation $\dagger$ extends to tensors, and in this case we denote it by the ordinary overbar $\bar{(\cdot)}$.} 
$\iota^{A}=(o^{A})^{\dagger}$, and write $K_0\equiv o^Ao^B K_{AB}$, $K_1\equiv i o^A\iota^B K_{AB}$, $K_2 \equiv \iota^{A}\iota^{B}K_{AB}$. Then $K_2=\overline{K}_0$ and $K_1=\overline{K}_1$. To construct a tetrad we also need a primed spin dyad $(\alpha^{A'},\beta^{A'})$, with $\alpha_{A'}\beta^{A'}=1$ and $\beta^{A'}=(\alpha^{A'})^{\dagger}$. Then a null tetrad $(\ell^{a},n^{a},m^{a},\tilde{m}^{a})$ is given by $\ell^{a}=o^{A}\alpha^{A'}$, $m^{a}=o^{A}\beta^{A'}$, $n^{a}=\overline{\ell}^{a}$ and $\tilde{m}^{a}=-\overline{m}^{a}$, and \eqref{KSE} becomes
\begin{equation}\label{KSEcomponents}
\begin{aligned}
& \text{\rm \th}K_0+2 i \kappa K_1 = 0, \qquad
(\text{\rm \th}'+2\rho')K_0 + 2 i(\text{\rm \dh}+\tau)K_1+2\sigma K_2 = 0, \\
& \text{\rm \dh}K_0+2 i \sigma K_1= 0, \qquad
 (\text{\rm \dh}'+2\tau')K_0 + 2 i(\text{\rm \th}+\rho)K_1+2\kappa K_2 = 0,
\end{aligned}
\end{equation}
plus the complex conjugates. In GHP language, $K_0,K_1,K_2$ have, respectively, weights $\{2,0\}$, $\{0,0\}$, $\{-2,0\}$. The action of $ \text{\rm \th}, \text{\rm \th}', \text{\rm \dh},  \text{\rm \dh}'$ on a scalar with weights $\{p,0\}$ is:
\begin{align}
 \text{\rm \th} = \ell^a\nabla_{a} - p\epsilon, \qquad
 \text{\rm \th}' = n^a\nabla_{a} + p\epsilon', \qquad
 \text{\rm \dh} = m^a\nabla_{a} - p\beta, \qquad
 \text{\rm \dh}' = \tilde{m}^a\nabla_{a} + p\beta'.
\end{align}
The spin coefficients are defined by 
\begin{equation}\label{spincoeff}
\begin{aligned}
 & \kappa = m^{a}\ell^{b}\nabla_{b}\ell_{a}, \qquad \sigma = m^{a}m^{b}\nabla_{b}\ell_{a}, \qquad \rho = m^{a}\tilde{m}^{b}\nabla_{b}\ell_{a}, \qquad \tau=m^{a}n^{b}\nabla_{b}\ell_{a}, \\
 & \epsilon = \tfrac{1}{2}(n^{a}\ell^{b}\nabla_{b}\ell_a+m^{a}\ell^{b}\nabla_{b}\tilde{m}_{a}), \qquad 
 \beta = \tfrac{1}{2}(n^{a}m^{b}\nabla_{b}\ell_a+m^{a}m^{b}\nabla_{b}\tilde{m}_{a}),
\end{aligned}
\end{equation}
and $\kappa'=-\bar\kappa$, $\sigma'=\bar\sigma$, $\rho'=\bar\rho$, $\tau'=-\bar\tau$, $\epsilon'=\bar\epsilon$, $\beta'=-\bar\beta$.
Once we solve \eqref{KSEcomponents} for $K_0,K_1,K_2$, the Killing spinor is $K_{AB} = K_0\iota_A\iota_B + 2 i K_1 o_{(A}\iota_{B)} + K_2 o_Ao_B$. To find the CKY tensor $Z_{ab}=K_{AB}\epsilon_{A'B'}$, we multiply by $\epsilon_{A'B'}$ and use the fact that a basis of real self-dual 2-forms is given by
\begin{equation}\label{SDforms}
\begin{aligned}
 \omega^{1}_{ab} ={}& -2 i o_{(A}\iota_{B)}\epsilon_{A'B} = (e^0\wedge e^1 + e^2\wedge e^3)_{ab}, \\ 
 \omega^{2}_{ab} ={}& i (o_Ao_B-\iota_A\iota_B)\epsilon_{A'B'} = (e^1\wedge e^2 + e^0\wedge e^3)_{ab}, \\ 
 \omega^{3}_{ab} ={}& -(o_Ao_B+\iota_A\iota_B)\epsilon_{A'B'} = (e^1\wedge e^3 - e^0\wedge e^2)_{ab},
\end{aligned}
\end{equation}
where we used the orthonormal coframe defined by $e^0=\frac{1}{\sqrt{2}}(\ell+n)$, $e^1=\frac{1}{i\sqrt{2}}(\ell-n)$, $e^2=\frac{1}{\sqrt{2}}(m-\tilde{m})$, $e^3=\frac{1}{i\sqrt{2}}(m+\tilde{m})$. Then the solution to \eqref{CKY} is
\begin{align}\label{CKYsolution}
Z_{ab} = a_1 \, \omega^{1}_{ab} + a_2 \, \omega^{2}_{ab} + a_3 \, \omega^{3}_{ab}
\end{align}
where 
\begin{align}\label{componentsCKY}
 a_1 = -K_1, \qquad a_2 = \tfrac{i}{2}(K_0-K_2), \qquad a_3 = -\tfrac{1}{2}(K_0+K_2).
\end{align}

\subsection{Euclidean 4-space}
Consider the case $(M,g)=(\mathbb{R}^4,g_0)$, where
\begin{align}
 g_{0} = \td{r}^2 + \frac{r^2}{4}\left[(\td\psi+\cos\theta\td\phi)^2+\td\theta^2+\sin^2\theta\td\phi^2 \right].
\end{align}
We choose the orthonormal coframe 
\begin{align}\label{ON}
e^0 = \tfrac{r}{2}(\td\psi+\cos\theta\td\phi), \qquad e^1=\td{r}, \qquad e^2=\tfrac{r}{2}\td\theta, \qquad e^3=\tfrac{r\sin\theta}{2}\td\phi, 
\end{align}
and the orientation $\varepsilon = e^0\wedge e^1\wedge e^2\wedge e^3$. We construct a null coframe $\ell = \frac{1}{\sqrt{2}}(e^0 + i e^1)$, $m=\frac{1}{\sqrt{2}}(e^2 + i e^3)$, $n=\bar\ell$, $\tilde{m}=-\bar{m}$. The associated spin coefficients \eqref{spincoeff} are found to be
\begin{align}
 \kappa=\sigma=\rho=\tau=0, \qquad \epsilon=\frac{i}{\sqrt{2} \; r}, \qquad \beta = -\frac{\cot\theta}{\sqrt{2} \; r}.
\end{align}
The condition $\kappa=\sigma=\rho=\tau=0$ automatically implies that $\omega^{1}$ in \eqref{SDforms} is K\"ahler.
We will focus on solutions to \eqref{CKY} which are invariant under $\mathfrak{t}^2={\rm span}(\partial_{\psi},\partial_{\phi})$, i.e. the functions $a_i$ in \eqref{componentsCKY} are $a_i=a_i(r,\theta)$. Then the left hand sides of \eqref{KSEcomponents} become
\begin{equation}\label{KSEcompE4}
\begin{aligned}
 \text{\rm \th}K_0+2 i\kappa K_1 ={}& \tfrac{i}{\sqrt{2}}(\partial_{r}-\tfrac{2}{r})K_0, \\
 \text{\rm \dh}K_0+2 i\sigma K_1={}& \tfrac{\sqrt{2}}{r}(\partial_{\theta}-\cot\theta)K_0, \\
 (\text{\rm \th}'+2\rho')K_0 + 2i(\text{\rm \dh}+\tau)K_1+2\sigma K_2 ={}& -\tfrac{i}{\sqrt{2}}(\partial_{r}+\tfrac{2}{r})K_0+\tfrac{i2\sqrt{2}}{r}\;\partial_{\theta}K_1, \\
 (\text{\rm \dh}'+2\tau')K_0 + 2i(\text{\rm \th}+\rho)K_1+2\kappa K_2 ={}& -\tfrac{\sqrt{2}}{r}(\partial_{\theta}+\cot\theta)K_0 - \sqrt{2} \; \partial_{r}K_1.
\end{aligned}
\end{equation}
Equating to zero, the solution is not difficult to find:
\begin{align}\label{Kflatspace}
 K_0 = -k_1 r^2\sin\theta = K_2, \qquad K_1 = k_1 r^2\cos\theta \red{-} k_2
\end{align}
where $k_1$ and $k_2$ are arbitrary real constants. Thus, from \eqref{componentsCKY} we get 
\begin{align}
a_1 = - k_1 r^2\cos\theta + k_2, \qquad a_2=0, \qquad a_3=k_1 r^2\sin\theta. 
\end{align}
Using \eqref{CKYsolution}, the general $\mathfrak{t}^2$-invariant CKY tensor in $\mathbb{E}^4$ is then
\begin{align}
 Z = k_1 \; r^2 \left( - \cos\theta \; \omega^1 +\sin\theta \; \omega^{3} \right) + k_2 \;\omega^{1},   \label{eq:Z0}
\end{align}
where $\omega^1,\omega^3$ are given by \eqref{SDforms} in terms of the coframe \eqref{ON}. We note that the solution $k_1=0$, $k_2\neq0$ is parallel (as noted before), thus the interesting solution is $k_1\neq0$, $k_2=0$. For generic $k_1,k_2$, the norm is $|Z|^2=4(k_1^2 r^4 + k^2_2 - 2 k_1 k_2 r^2 \cos \theta)$.

\subsection{ALE manifolds}
Let $(M,g)$ be ALE Ricci-flat, and let $Z$ be a solution to \eqref{CKY}. Let $Z^0$ is a solution to \eqref{CKY} in the asymptotic flat metric $\td r^2+r^2\gamma$, so from \eqref{eq:Z0} we deduce $Z^0=O(r^2)$. Writing $Z=Z^0+O(r)$ we will show that the subleading term is actually $O(r^{-2})$. From Definition \ref{def:ALE} and Remark \ref{remark:BKN}, the connection coefficients are $O(r^{-5})$, thus the CKY operator is $LZ=L^0Z+O(r^{-3})$, where $L^0$ is the CKY operator in flat space. We can then write the first equation in \eqref{KSEcompE4} as $r^2\partial_{r}(r^{-2}K_0)=O(r^{-3})$. The homogeneous solution gives \eqref{Kflatspace}, and the inhomogeneous solution gives $K_0=O(r^{-2})$. The rest of the equations in \eqref{KSEcompE4} give the inhomogenous solution $K_1=O(r^{-2})$ (and recall $K_2=\overline{K}_0$), so from \eqref{CKYsolution} and \eqref{componentsCKY} we see that $Z=Z^0+O(r^{-2})$.


\begin{thebibliography}{99}


\bibitem{Aksteiner:2021fae}
S.~Aksteiner and L.~Andersson, 
{\it Gravitational instantons and special geometry},
J. Diff. Geom. \textbf{128} (2024) no.3, 928-958, 
%doi:10.4310/jdg/1729092451
\href{https://arxiv.org/abs/2112.11863}{[arXiv:2112.11863 [gr-qc]].}

\bibitem{Aksteiner:2023djq}
S.~Aksteiner, L.~Andersson, M.~Dahl, G.~Nilsson and a.~Simon,
{\it Gravitational instantons with $S^1$ symmetry},
J. reine angew. Math. \textbf{826} (2025), 45-90,
\href{https://arxiv.org/abs/2306.14567}{[arXiv:2306.14567 [math.DG]].}

\bibitem{Apostolov}
V.~Apostolov and P.~Gauduchon, 
{\it The Riemannian Goldberg--Sachs theorem}, 
\href{https://www.worldscientific.com/doi/abs/10.1142/S0129167X97000214}{Internat. J. Math. 8 (4) (1997) 421--439}

\bibitem{Araneda:2023xnv}
B.~Araneda,
{\it Hidden symmetries of generalised gravitational instantons},
Ann. Henri Poincar\'e (2024). https://doi.org/10.1007/s00023-024-01515-1, 
\href{https://arxiv.org/abs/2309.05617}{[arXiv:2309.05617 [gr-qc]].}

\bibitem{Araneda:2023qio}
B.~Araneda,
{\it Hyper-K{\"a}hler instantons, symmetries, and flat spaces},
Rev. Math. Phys. \textbf{37} (2025) no.03, 2450033
%doi:10.1142/S0129055X24500338
\href{https://arxiv.org/abs/2312.16136}{[arXiv:2312.16136 [math.DG]].}

\bibitem{BKN}
S.~Bando, A.~Kasue, and H.~Nakajima, 
{\it On a construction of coordinates at infinity on manifolds with fast curvature decay and maximal volume growth}, 
\href{https://link.springer.com/article/10.1007/BF01389045}{Inventiones mathematicae, 97(2), 313-349 (1989).}


\bibitem{Biquard:2021gwj}
O.~Biquard and P.~Gauduchon,
{\it On Toric Hermitian ALF Gravitational Instantons},
Commun. Math. Phys. \textbf{399} (2023) no.1, 389-422,
%doi:10.1007/s00220-022-04562-z
\href{https://arxiv.org/abs/2112.12711}{[arXiv:2112.12711 [math.DG]].}

\bibitem{Biquard:2023rbr}
O.~Biquard and P.~Gauduchon,
{\it About a Family of ALF Instantons with Conical Singularities},
SIGMA \textbf{19} (2023), 079, 
%doi:10.3842/SIGMA.2023.079
\href{https://arxiv.org/abs/2306.11110}{[arXiv:2306.11110 [math.DG]].}

\bibitem{Biquard:2023gyl}
O.~Biquard, P.~Gauduchon and C.~LeBrun,
{\it Gravitational Instantons, Weyl Curvature, and Conformally K{\"a}hler Geometry},
Int. Math. Res. Not. \textbf{2024} (2024) no.20, 13295-13311,
\href{https://arxiv.org/abs/2310.14387}{[arXiv:2310.14387 [math.DG]].}

\bibitem{Casteill}
P.~Y.~Casteill, E.~Ivanov and G.~Valent,
{\it $U(1)\times U(1)$ quaternionic metrics from harmonic superspace}, 
Nucl. Phys. B \textbf{627} (2002), 403-444, 
\href{https://arxiv.org/abs/hep-th/0110280}{[arXiv:hep-th/0110280 [hep-th]].}

\bibitem{Chen:2010zu}
Y.~Chen and E.~Teo,
{\it Rod-structure classification of gravitational instantons with $U(1) \times U(1)$ isometry},
Nucl. Phys. B \textbf{838} (2010), 207-237, 
\href{https://arxiv.org/abs/1004.2750}{[arXiv:1004.2750 [gr-qc]].}

\bibitem{Chen:2011tc}
Y.~Chen and E.~Teo,
{\it A New AF gravitational instanton},
Phys. Lett. B \textbf{703} (2011), 359-362, 
\href{https://arxiv.org/abs/1107.0763}{[arXiv:1107.0763 [gr-qc]].}

\bibitem{Chen:2015vva}
Y.~Chen and E.~Teo,
{\it Five-parameter class of solutions to the vacuum Einstein equations},
Phys. Rev. D \textbf{91} (2015) no.12, 124005, 
\href{https://arxiv.org/abs/1504.01235}{[arXiv:1504.01235 [gr-qc]].}

\bibitem{Derdzinski}
A. Derdzi\'nski, 
{\it Self-dual K\"ahler manifolds and Einstein manifolds of dimension four}, 
\href{https://www.numdam.org/item/?id=CM_1983__49_3_405_0}{Compositio Math., 49(3):405433, 1983.}

\bibitem{Dunajski:2009dqa}
M.~Dunajski and P.~Tod,
{\it Four--Dimensional Metrics Conformal to Kahler},
Math. Proc. Cambridge Phil. Soc. \textbf{148} (2010), 485, 
\href{https://arxiv.org/abs/0901.2261}{[arXiv:0901.2261 [math.DG]].}

\bibitem{Gibbons:1979xn}
G.~W.~Gibbons and C.~N.~Pope,
{\it The Positive Action Conjecture and Asymptotically Euclidean Metrics in Quantum Gravity},
\href{https://link.springer.com/article/10.1007/BF01197188}{Commun. Math. Phys. \textbf{66} (1979), 267-290}

\bibitem{Gibbons:1979xm}
G.~W.~Gibbons and S.~W.~Hawking,
{\it Classification of Gravitational Instanton Symmetries},
\href{https://link.springer.com/article/10.1007/BF01197189}{Commun. Math. Phys. \textbf{66} (1979), 291-310}

\bibitem{Gibbons}
G.~W.~Gibbons, 
``Gravitational instantons: A survey'',  in: Osterwalder, K. (eds) \href{https://link.springer.com/chapter/10.1007/3-540-09964-6_335}{Mathematical Problems in Theoretical Physics}. Lecture Notes in Physics, vol 116. Springer, Berlin, Heidelberg (1980)

\bibitem{Hawking:1976jb}
S.~W.~Hawking,
{\it Gravitational Instantons},
\href{https://www.sciencedirect.com/science/article/abs/pii/0375960177903863}{Phys. Lett. A \textbf{60} (1977), 81}
%doi:10.1016/0375-9601(77)90386-3

\bibitem{Hollands:2007aj}
S.~Hollands and S.~Yazadjiev,
{\it Uniqueness theorem for 5-dimensional black holes with two axial Killing fields},
Commun. Math. Phys. \textbf{283} (2008), 749-768
%doi:10.1007/s00220-008-0516-3
\href{https://arxiv.org/abs/0707.2775}{[arXiv:0707.2775 [gr-qc]].}

\bibitem{Kronheimer}
P.~B.~Kronheimer, 
{\it The construction of ALE spaces as hyper-K\"ahler quotients}, 
\href{https://projecteuclid.org/journals/journal-of-differential-geometry/volume-29/issue-3/The-construction-of-ALE-spaces-as-hyper-Kahler-quotients/10.4310/jdg/1214443066.full}{J. Diff. Geom. 29, no.3, 665-683 (1989)}

\bibitem{Kronheimer2}
P.~B.~Kronheimer, 
{\it A Torelli-type theorem for gravitational instantons}, 
\href{https://projecteuclid.org/journals/journal-of-differential-geometry/volume-29/issue-3/A-Torelli-type-theorem-for-gravitational-instantons/10.4310/jdg/1214443067.full}{J. Diff. Geom., 29, pp. 685--697 (1989)}

\bibitem{Kunduri:2021xiv}
H.~K.~Kunduri and J.~Lucietti,
{\it Existence and uniqueness of asymptotically flat toric gravitational instantons},
Lett. Math. Phys. \textbf{111} (2021) no.5, 133, 
\href{https://arxiv.org/abs/2107.02540}{[arXiv:2107.02540 [math.DG]].}



\bibitem{Lapedes:1980st}
A.~S.~Lapedes,
{\it Black Hole Uniqueness Theorems in Euclidean Quantum Gravity},
\href{https://journals.aps.org/prd/abstract/10.1103/PhysRevD.22.1837}{Phys. Rev. D \textbf{22} (1980), 1837}

\bibitem{LB}
C.~LeBrun,  
{\it Counter-examples to the generalized positive action conjecture}, 
\href{https://link.springer.com/article/10.1007/BF01221110}{Commun.Math. Phys. 118, 591-596 (1988).}
%https://doi.org/10.1007/BF01221110

\bibitem{Li2023} 
M.~Li, %\ 2023,  
{\it On 4-dimensional Ricci-flat ALE manifolds},
\href{https://arxiv.org/abs/2304.01609}{arXiv:2304.01609 [math.DG].}

\bibitem{LiSun} 
M.~Li and S.~Sun, %\ 2025, 
{\it Gravitational instantons and harmonic maps}, 
\href{https://arxiv.org/abs/2507.15284}{arXiv:2507.15284 [math.DG].}

\bibitem{Nakajima}
H. Nakajima, 
{\it Self-duality of ALE Ricci-flat 4-manifolds and Positive Mass Theorem}, Advanced Studies in Pure Mathematics 18-1, 1990, \href{https://projecteuclid.org/proceedings/advanced-studies-in-pure-mathematics/Recent-Topics-in-Differential-and-Analytic-Geometry/Chapter/Self-Duality-of-ALE-Ricci-Flat-4-Manifolds-and-Positive/10.2969/aspm/01810385}{Recent Topics in Differential and Analytic Geometry, pp. 385-396}

%\cite{Nilsson:2023ina}
\bibitem{Nilsson:2023ina}
G.~Nilsson,
{\it Topology of toric gravitational instantons},
Differ. Geom. Appl. \textbf{96} (2024), 102171
%doi:10.1016/j.difgeo.2024.102171
\href{https://arxiv.org/abs/2301.10212}{[arXiv:2301.10212 [math.DG]].}
%5 citations counted in INSPIRE as of 09 Oct 2025

\bibitem{OR}
P.~Orlik and F.~Raymond, 
{\it Actions of the torus on 4-manifolds I}, 
\href{https://www.jstor.org/stable/1995586?origin=crossref}{Transactions of the AMS 152, (1972)}

\bibitem{Tod:2020ual}
P.~Tod,
{\it One-sided type-D Ricci-flat metrics},
\href{https://arxiv.org/abs/2003.03234}{[arXiv:2003.03234 [gr-qc]].}

\bibitem{Tod:2024zpa}
P.~Tod,
{\it Rod Structures and Patching Matrices: a review},
\href{https://arxiv.org/abs/2411.02096}{[arXiv:2411.02096 [math.DG]].}

\bibitem{PR1}
R.~Penrose and W.~Rindler,
{\it Spinors And Space-time. 1. Two Spinor Calculus And Relativistic Fields},
\href{https://www.cambridge.org/mw/universitypress/subjects/physics/theoretical-physics-and-mathematical-physics/spinors-and-space-time-volume-1?format=AR}{Cambridge, Uk: Univ. Pr. (1984) 458 P.} (Cambridge Monographs On Mathematical Physics)

\bibitem{PR2}
R.~Penrose and W.~Rindler,
{\it Spinors And Space-time. Vol. 2: Spinor And Twistor Methods In Space-time Geometry},
\href{https://www.cambridge.org/gb/universitypress/subjects/physics/theoretical-physics-and-mathematical-physics/spinors-and-space-time-volume-2?format=PB&isbn=9780521347860}{Cambridge, Uk: Univ. Pr. (1986) 501p}

\bibitem{Ward:1990qt}
R.~S.~Ward,
{\it Einstein-Weyl spaces and $SU(\infty)$ Toda fields},
\href{https://iopscience.iop.org/article/10.1088/0264-9381/7/4/003}{Class. Quant. Grav. \textbf{7} (1990), L95-L98}

\bibitem{Weinstein}
G. Weinstein, 
{\it On rotating black holes in equilibrium in general relativity}, 
\href{https://onlinelibrary.wiley.com/doi/abs/10.1002/cpa.3160430705}{Commun.Pure Appl.Math. 43 (1990) 903-948.}
%DOI: 10.1002/cpa.3160430705


\end{thebibliography}
\end{document}